\title[Uniform Manin--Mumford over function fields]{A uniform quantitative Manin--Mumford theorem for curves over function fields}
\author{Nicole Looper}
\address{Department of Mathematics, Statistics, and Computer Science, University of Illinois at Chicago, 851 Morgan Street, IL 60607, Chicago, USA}
\email{nrlooper@uic.edu}
\author{Joseph Silverman}
\address{Mathematics Department, Brown University, 151 Thayer Street, RI 02912, Providence, USA}
\email{joseph\_silverman@brown.edu}
\author{Robert Wilms}
\address{Laboratoire de Math\'ematiques Nicolas Oresme, Universit\'e de Caen--Normandie BP 5186, 14032, Caen Cedex, France}
\email{robert.wilms@unicaen.fr}
\thanks{The first author was supported by NSF grant DMS-1803021. The second author was partially supported by Simons Collaboration Grant $\#712332$. The third author was supported by the Swiss National Science Foundation grant ``Diophantine Equations: Special Points, Integrality, and Beyond'' (n$^\circ$ 200020\_184623).}
\subjclass[2010]{11G30, 11G50}
\date{\today}
\numberwithin{equation}{section}
\newtheorem{Thm}{Theorem}[section]
\newtheorem{Lem}[Thm]{Lemma}
\newtheorem{Cor}[Thm]{Corollary}
\newtheorem{Pro}[Thm]{Proposition}
\begin{document}

\begin{abstract}
	We prove that any smooth projective geometrically connected non-isotrivial curve of genus $g\ge 2$ over a one-dimensional function field of any characteristic has at most $16g^2+32g+124$ torsion points for any Abel--Jacobi embedding of the curve into its Jacobian. The proof uses Zhang's admissible pairing on curves, the arithmetic Hodge index theorem over function fields, and the metrized graph analogue of Elkies' lower bound for the Green function. More generally, we prove an explicit Bogomolov-type result bounding the number of geometric points of small N\'eron--Tate height on the curve embedded into its Jacobian.
\end{abstract}
\maketitle
\section{Introduction}

Around 1963 Manin and Mumford independently conjectured that for any algebraically closed field $K$ of characteristic $0$, any smooth projective curve $X$ of genus $g\ge 2$ defined over $K$ and any divisor $D\in\operatorname{Div}^1(X)$ of degree $1$ on $X$, one has
\[\# j_D(X(K))\cap J(K)_{\operatorname{tors}}<\infty,\]
where $j_D\colon X\to J,~P\mapsto P-D$ denotes the Abel--Jacobi embedding of $X$ into its Jacobian $J=\operatorname{Pic}^0(X)$ associated to $D$. This was proved by Raynaud \cite[Th\'eor\`eme I]{Ray83} in 1983. There are several further questions and aspects one then naturally considers.

\subsection*{Positive characteristic}
One might ask whether an analogue of this conjecture is true in positive characteristic. Since for $K=\overline{\mathbb{F}}_p$ every $K$-point of a curve over $K$ is a torsion point, the naive analogue is certainly not true. Scanlon \cite[Proposition 4.4]{Sca01} and later Pink and R\"{o}ssler \cite[Theorem 3.6]{PR04} found and proved a suitable analogue of the Manin--Mumford conjecture in positive characteristic, using separate methods.

\subsection*{Uniformity}
It is reasonable to ask whether the number of torsion points contained in $j_D(X(K))$ is uniformly bounded only in terms of the genus $g$. This has recently been proven (in addition to a uniform Bogomolov-type result) by K\"uhne \cite{Kuh21} in characteristic $0$. In \cite{Yua21}, which appeared after our paper was posted on arXiv, Yuan proves uniform Manin--Mumford and Bogomolov-type results in arbitrary characteristic. However, in neither \cite{Kuh21} nor \cite{Yua21} are the results quantitative.

\subsection*{Quantitative Bounds}
It is interesting to consider the question of explicit upper bounds on the number of torsion points in $j_D(X(K))$. If the curve $X$ is defined over a number field $F$ and $J$ has complex multiplication, Coleman \cite[Theorem A]{Col85} obtained the explicit bound $pg$ for a prime number $p\ge5$ depending on the ramification of $F/\mathbb{Q}$ and the reduction type of $X$. Buium \cite[Theorem A]{Bui96} obtained the bound $p^{4g}3^g(p(2g-2)+6g)g!$ without the condition of complex multiplication.

If $\operatorname{char}(K)=0$, Baker and Poonen \cite{BP01} proved that there are only finitely many points $P\in X(K)$ such that $\#j_P(X(K))\cap J(K)_{\operatorname{tors}}>2$. For general $D\in\operatorname{Div}^1(X)$ we have $\{Q\in X(K):j_D(Q)\in J(K)_{\operatorname{tors}}\}=\{Q\in X(K):j_P(Q)\in J(K)_{\operatorname{tors}}\}$ whenever $D-P$ is torsion in $J(K)$. Thus, in bounding $\#j_D(X(K))\cap J(K)_{\operatorname{tors}}$ for general $D\in\operatorname{Div}^1(X)$, it remains only to bound the size of $j_P(X(K))\cap J(K)_{\operatorname{tors}}$ in the finitely many cases where $\#j_P(X(K))\cap J(K)_{\operatorname{tors}}>2$.\\

In this paper we address all three of the above questions for non-isotrivial curves over the function field $K=k(B)$ of any smooth projective connected curve $B$ over any algebraically closed field $k$.
A curve $X$ over $K$ is called isotrivial if there is a finite field extension $K'$ of $K$ and a curve $C$ defined over $k$ such that $X\otimes_{K} K'\cong C\otimes_{k}K'$.
\begin{Thm}\label{mainthm}
	Let $k$ be any algebraically closed field, $B$ any smooth projective connected curve over $k$ and $K=k(B)$ its function field. For any smooth projective geometrically connected non-isotrivial curve $X$ of genus $g\ge 2$ defined over $K$ and any Abel--Jacobi embedding
	\[j_D\colon X\to J,\quad x\mapsto x-D\]
	of $X$ into its Jacobian $J=\operatorname{Pic}^0(X)$, where $D\in\operatorname{Div}^1(X)$ is any divisor of degree $1$, the number of torsion points in $j_D(X(\overline{K}))$ is uniformly bounded by
	\[\# j_D(X(\overline{K}))\cap J(\overline{K})_{\operatorname{tors}}\le c(g)\le 16g^2+32g+124,\]
	where $c(2)=76$, $c(3)=231$ and $c(g)=\left\lfloor\frac{16g^4+37g^2-28g-1}{(g-1)^2}\right\rfloor$ for $g\ge 4$.
	
	Moreover, if $X$ has everywhere potentially good reduction, we may replace $c(g)$ by $1$. If only $J$ has everywhere potentially good reduction, we may replace $c(g)$ by $c^{tr}(g)$ with $c^{tr}(2)=11$ and $c^{tr}(g)=\left\lfloor \frac{4g^3-4g^2+2g+1}{g-1}\right\rfloor\le 4g^2+3$ for $g\ge 3$. If $\operatorname{char}~k=0$ or $X$ is hyperelliptic, we may replace $c(g)$ and $c^{tr}(g)$ by $\left\lfloor\frac{c(g)+1}{2}\right\rfloor$ and $\left\lfloor\frac{c^{tr}(g)+1}{2}\right\rfloor$ for $g\ge 3$.
\end{Thm}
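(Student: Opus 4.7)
My plan is to adapt the Ullmo--Zhang arithmetic Bogomolov method to the function-field setting with \emph{effective} constants, using four main ingredients: Zhang's admissible adelic metric on $\omega_{X/K}$; the Faltings--Hriljac identity $(E,E)_a = -2\hat h_J([E])$ relating admissible self-pairings of degree-$0$ divisors to Néron--Tate heights; Zhang's arithmetic Hodge index theorem over function fields; and the metrized-graph analogue of Elkies' lower bound for the canonical Green function on each reduction graph.

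First I would encode the torsion hypothesis as a vanishing of admissible pairings. If $P_1,\dots,P_N \in X(\overline K)$ all satisfy $j_D(P_i) \in J_{\mathrm{tors}}$, then every class $[(2g-2)(P_i-P_j)]$ is torsion in $J$, so Faltings--Hriljac gives $((2g-2)(P_i-P_j),(2g-2)(P_i-P_j))_a = 0$ and, by polarization, the admissible pairings of the degree-$0$ divisors $\varphi_i := (2g-2)P_i - K_{X/K}$ are all equal to a common constant depending only on $D$. Evaluating the admissible self-pairing of the averaged divisor $\bar\varphi := \tfrac 1N\sum_i \varphi_i$ in two different ways — first via this constancy, second via the admissible decomposition into a geometric term related to $(\omega_a,\omega_a)$ and a sum of Green-function contributions $g_{a,v}(P_i,P_j)$ over the bad fibres $v$ — produces, modulo an $O(1/N)$ error, an identity relating $(\omega_a,\omega_a)$ to the empirical energy $\tfrac 1{N^2}\sum_{i,j} g_{a,v}(P_i,P_j)$.

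The second step converts this identity into a bound on $N$. Negative semi-definiteness of $g_{a,v}$ on zero-mass measures yields the variance inequality $\tfrac 1{N^2}\sum_{i,j} g_{a,v}(P_i,P_j) \geq \iint g_{a,v}\, d\mu_{a,v}\, d\mu_{a,v}$, where $\mu_{a,v}$ is Zhang's canonical admissible measure on the dual graph $\Gamma_v$. Combined with the local decomposition $(\omega_a,\omega_a) = \sum_v \varepsilon(\Gamma_v)$ coming from the arithmetic Hodge index theorem, this recasts the whole question in terms of graph invariants. The metrized-graph Elkies inequality then bounds each local energy $\iint g_{a,v}\, d\mu_{a,v}^{\otimes 2}$ from below by an explicit combinatorial quantity in $g$; non-isotriviality guarantees $(\omega_a,\omega_a) > 0$, and rearranging yields $N \leq c(g)$.

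The sub-cases follow by specializing the reduction graphs. When $X$ has everywhere potentially good reduction, every $\Gamma_v$ collapses to a single vertex, all local energies vanish, and the identity collapses to force $N \leq 1$. When only $J$ has everywhere potentially good reduction, each $\Gamma_v$ is a tree, on which the Elkies inequality takes a sharper form and gives $c^{\mathrm{tr}}(g)$. The factor-of-two improvement in characteristic $0$ or the hyperelliptic setting reflects additional symmetry pairing the $P_i$ — the hyperelliptic involution, or a sharper form of the local energy bound available in characteristic $0$ — that effectively halves the counted set. The main obstacle I anticipate is the quantitative sharpness of the metrized-graph Elkies inequality: matching the announced constants $16g^2+32g+124$ and the explicit form of $c^{\mathrm{tr}}(g)$ requires the graph-theoretic bounds to be tight enough that no factor of $g$ is lost when the variance inequality is undone, and this combinatorial work is where the bulk of the technical effort must live.
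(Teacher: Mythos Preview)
Your overall architecture is close to the paper's: expand the Néron--Tate height of $s\omega-(2g-2)F$ via the admissible pairing, reduce to a sum of Green-function values $\sum_{j\neq k}g_v(P_j,P_k)$ over bad places, and bound that sum below via the metrized-graph Elkies inequality. But the second half of your outline contains two concrete errors that would prevent the argument from closing.

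First, the ``local decomposition $(\omega_a,\omega_a)=\sum_v\varepsilon(\Gamma_v)$'' you invoke does not exist. Zhang's formula is $\omega_a^2=\omega_{\mathcal X/B}^2-\sum_v\epsilon(\Gamma_v)$, and this is not what the Hodge index theorem gives. What the paper actually extracts from the arithmetic Hodge index theorem on $X^2$ (applied to carefully chosen bundles built from $\Delta_a$ and $p_i^*\omega_a$) is the \emph{inequality}
\[
\omega_a^2\;\ge\;\tfrac{\max(2,g-1)}{2g+1}\sum_v\varphi(\Gamma_v),
\]
in terms of Zhang's $\varphi$-invariant, not $\epsilon$. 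The $\varphi$-invariant is absent from your outline but is the pivot of the whole proof.

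Second, your ``variance inequality'' step is vacuous as stated: $\iint g_{a,v}\,d\mu_{a,v}\,d\mu_{a,v}=0$ by the very definition of the Green function, so bounding the empirical energy against it gives no information. The Elkies-type bound the paper uses is $\sum_{j\neq k}g_v(x_j,x_k)\ge -s\cdot\sup_x g_v(x,x)$, and the entire force of the argument then rests on bounding $\sup_x g_v(x,x)\le c'(g)\,\varphi(\Gamma_v)$. This estimate (the paper's Lemma~\ref{lem_sup}) is the technical core: it combines Moriwaki's formula for $g(x,x)$, a resistance bound via the $\tau$-invariant, de Jong's relation $\tau=\tfrac1{12}(\delta+4\varphi-2\epsilon)$, and \c{C}inkir's inequalities $\delta\le c_C(g)\varphi$. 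Without it there is no common currency in which to compare the upper bound on $\omega_a^2$ (coming from the Green sums) with the lower bound (coming from Hodge index), and the chain
\[
\tfrac{\max(2,g-1)}{2g+1}\varphi(X)\;\le\;\omega_a^2\;\le\;\tfrac{4(g-1)g\,c'(g)}{s-1}\,\varphi(X)
\]
that yields the bound on $s$ cannot be formed.

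Finally, your explanation of the factor-of-two improvement is off: it has nothing to do with the hyperelliptic involution pairing points. It comes from a sharper lower bound $\omega_a^2\ge\tfrac{2(g-1)}{2g+1}\varphi(X)$ that Zhang proved in characteristic~$0$ (and that holds with equality for hyperelliptic curves), doubling the constant in the Hodge-index step.
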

As a consequence we can bound the cardinality of $j_D(X(\overline{K}))\cap J(\overline{K})_{\operatorname{tors}}$ for every field $K$ and any curve $X/K$ that cannot be defined over an algebraic extension of the prime field of $K$.
\begin{Cor}\label{cor}
	Let $K$ be any algebraically closed field and $X$ any smooth projective connected curve of genus $g\ge 2$ over $K$. Assume that $X$ cannot be defined over any algebraic extension of the prime field of $K$. For every divisor $D\in\operatorname{Div}^1(X)$ of degree $1$,
	\[j_D(X(K))\cap J(K)_{\operatorname{tors}}\le c(g),\]
	where $J=\operatorname{Pic}^0(X)$ and $c(g)$ as in Theorem \ref{mainthm}.
\end{Cor}
If $X/K$ is definable over an algebraic extension of a finite field $\mathbb{F}_p$, the intersection $j_D(X(K))\cap J(K)_{\operatorname{tors}}$ is infinite as every point in $J(\overline{\mathbb{F}}_p)$ is a torsion point. Thus, only the case of curves definable over a number field remains open.

We will deduce Theorem \ref{mainthm} from the following more general result on the geometric Bogomolov conjecture, which bounds the number of geometric points $P\in X(\overline{K})$ of small N\'eron--Tate height $h_{\operatorname{NT}}(j_D(P))$ on the Jacobian $J$. To recall the definition of $h_{\operatorname{NT}}$, let $\Theta$ be a theta divisor on $J$ and $\mathcal{L}=\mathcal{O}_J(\Theta)\otimes [-1]^*\mathcal{O}_J(\Theta)$ the associated ample and symmetric line bundle. If $h_{\mathcal{L}}$ is a Weil height associated to $\mathcal{L}$, we set $h_{\operatorname{NT}}(x)=\lim_{n\to \infty}\frac{h_{\mathcal{L}}(nx)}{n^2}$. 

\begin{Thm}\label{thm_bogomolov}
	Let $k$ be any algebraically closed field, $B$ any smooth projective connected curve over $k$ and $K=k(B)$ its function field. Further, let $g\ge 2$ be an integer and $\epsilon\in \left[0,\max\left(\frac{1}{8g},\frac{1}{9(g-1)}\right)\right)$ a real number. For any smooth projective geometrically connected non-isotrivial curve $X$ of genus $g$ defined over $K$ and any divisor $D\in\operatorname{Div}^1(X)$ of degree $1$, 
	\[\# \left\{P\in X(\overline{K})~|~h_{\operatorname{NT}}(j_D(P))\le \epsilon \omega_a^2\right\}\le c(g,\epsilon),\]
	where $h_{\operatorname{NT}}$ denotes the N\'eron~--Tate height on the Jacobian $J=\operatorname{Pic}^0(X)$, $\omega_a^2$ is the self-pairing of the admissible relative dualizing sheaf $\omega_a$ on $X$, and the constant $c(g,\epsilon)$ is given by $c(2,\epsilon)=\left\lfloor \frac{75}{1-9\epsilon}\right\rfloor+1$, $c(3,\epsilon)=\left\lfloor\frac{3920}{17(1-18\epsilon)}\right\rfloor +1$ and $c(g,\epsilon)=\left\lfloor\frac{16g^4+36g^2-26g-2}{(g-1)^2(1-\min\{9(g-1),8g\}\epsilon)}\right\rfloor+1$ for $g\ge 4$.
	
	Moreover, if $J$ has everywhere potentially good reduction, we may replace $c(g,\epsilon)$ by $c^{tr}(g,\epsilon)$ with $c^{tr}(2,\epsilon)=\left\lfloor \frac{10}{1-9\epsilon}\right\rfloor+1$ and $c^{tr}(g,\epsilon)=\left\lfloor \frac{4g^3-4g^2+g+2}{(g-1)(1-\min\{9(g-1),8g\}\epsilon)}\right\rfloor+1$ for $g\ge 3$. If $\operatorname{char}~k=0$ or $X$ is hyperelliptic, we may replace $c(g,\epsilon)$ and $c^{tr}(g,\epsilon)$ by $\left\lfloor\frac{c(g,\epsilon)+1}{2}\right\rfloor$ and $\left\lfloor\frac{c^{tr}(g,\epsilon)+1}{2}\right\rfloor$ for $g\ge 3$.
	Finally, if $\epsilon\in\left[0,\frac{1}{4(g^2-1)}\right)$ and $X$ has everywhere potentially good reduction, we may replace $c(g,\epsilon)$ by $1$.
\end{Thm}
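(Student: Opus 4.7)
The plan is to adapt the Bogomolov--Zhang strategy, carried out via Zhang's admissible pairing, to the function-field setting with all constants made explicit. Fix $n$ distinct points $P_1,\ldots,P_n \in X(\overline K)$ with $h_{\mathrm{NT}}(j_D(P_i)) \le \epsilon\omega_a^2$; the goal is to show $n \le c(g,\epsilon)$. First I would convert the height bound on $j_D(P_i)$ into a bound on pairwise differences: since $j_D(P_i)-j_D(P_j)=P_i-P_j$, the parallelogram identity for the Néron--Tate pairing gives, for $i\ne j$,
\[
h_{\mathrm{NT}}(P_i-P_j) \le 2h_{\mathrm{NT}}(j_D(P_i)) + 2h_{\mathrm{NT}}(j_D(P_j)) \le 4\epsilon\,\omega_a^2,
\]
so summing over ordered pairs yields the upper bound $\sum_{i\ne j} h_{\mathrm{NT}}(P_i-P_j) \le 4\epsilon\,n(n-1)\,\omega_a^2$.

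Next I would apply the arithmetic Hodge index theorem over function fields (Faltings--Hriljac, extended by Moriwaki and Yuan--Zhang) to identify $h_{\mathrm{NT}}(P_i-P_j)$ with $-\tfrac{1}{2}(\overline{P_i}-\overline{P_j})_a^2$, where $\overline{P}$ denotes the admissible metrized line bundle on the model. Expanding each square and applying admissible adjunction $(\overline{P_i})_a^2 = -(\overline{P_i},\overline\omega_a)_a$ rewrites the sum as a combination of mutual admissible intersections $(\overline{P_i},\overline{P_j})_a$ and admissible canonical heights $(\overline{P_i},\overline\omega_a)_a$. Each mutual intersection decomposes into a finite sum over the (bad-reduction) places $v\in B$ of Green-function values $g_{v,a}(r_v(P_i),r_v(P_j))$ on the reduction graph $\Gamma_v$; an analogous local decomposition holds for the canonical contributions, with totals controlled by $\omega_a^2$.

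The analytic heart of the argument is then the metrized-graph analogue of Elkies' lower bound: for any metrized graph $\Gamma$ with Zhang's canonical measure and any finite configuration $x_1,\ldots,x_n\in\Gamma$, one has an explicit lower bound on $\sum_{i\ne j}g_a(x_i,x_j)$ in terms of the total length of $\Gamma$, its first Betti number, and the Zhang $\tau$-invariant. Summing these local Elkies bounds over $v$, and combining with the admissible canonical contributions, one obtains a \emph{lower} bound of the form
\[
\sum_{i\ne j} h_{\mathrm{NT}}(P_i-P_j) \ \ge\ \frac{(n-1)^2}{C(g)}\,\omega_a^2 - (\text{lower-order in }n)
\]
for an explicit polynomial $C(g)$ in $g$. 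Non-isotriviality of $X$ guarantees $\omega_a^2>0$ (the function-field Bogomolov-type inequality of Zhang), so one may divide by $\omega_a^2$ and compare this with the upper bound from the parallelogram step. Solving for $n$ and optimizing constants produces the explicit $c(g,\epsilon)$ in the statement.

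The three refinements are then special cases of the same mechanism. If $X$ has everywhere potentially good reduction then every $\Gamma_v$ is a single point and all $g_{v,a}\equiv 0$, so the Elkies inequality forces $n\le 1$. If only $J$ has potentially good reduction, each $\Gamma_v$ is a tree (first Betti number zero), which sharpens every constant in the Elkies bound and yields $c^{\mathrm{tr}}(g,\epsilon)$. The factor-of-two improvement in the hyperelliptic or characteristic-zero case comes from the hyperelliptic involution $\iota$: it preserves the small-height set $\{P:h_{\mathrm{NT}}(j_D(P))\le\epsilon\omega_a^2\}$ up to a fixed translation in $J$, pairing points $P\leftrightarrow\iota(P)$ and halving the effective count. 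The principal technical obstacle is the metrized-graph Elkies inequality in Step~3: one must produce numerical constants that are sharp enough to deliver the stated bound $16g^2+32g+124$, which demands a careful optimization over all combinatorial types of reduction graphs of first Betti number at most $g$ and over the possible reductions of the $P_i$.
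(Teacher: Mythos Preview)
Your outline captures the overall shape—parallelogram law, admissible pairing, Elkies on metrized graphs—but it has two genuine gaps that prevent it from closing, and one of the refinements is explained incorrectly.

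First, the inequality you propose,
\[
\sum_{i\ne j} h_{\mathrm{NT}}(P_i-P_j)\ \ge\ \frac{(n-1)^2}{C(g)}\,\omega_a^2 - (\text{lower order}),
\]
does not follow from the ingredients you list. The Elkies bound controls $\sum_{i\ne j} g_v(x_i,x_j)$ in terms of graph invariants (the paper phrases everything via Zhang's $\varphi(\Gamma_v)$, bounding $\sup g_v$ by $c'(g)\varphi(\Gamma_v)$ in Lemma~\ref{lem_sup}), not in terms of $\omega_a^2$; and merely knowing $\omega_a^2>0$ from non-isotriviality gives no quantitative link between the two. The paper supplies this link by a \emph{separate} application of the arithmetic Hodge index theorem, on $X^2$ rather than on $X$ (so not the Faltings--Hriljac identity you invoke), obtaining the explicit lower bound $\omega_a^2\ge \tfrac{\max(2,g-1)}{2g+1}\sum_v\varphi(\Gamma_v)$ (Proposition~\ref{pro_lowerbound}). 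This is the step that lets the graph invariants cancel against $\omega_a^2$ and produces constants independent of $X$; without it you cannot get an explicit $c(g,\epsilon)$.

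Second, you are missing the inequality that actually produces an \emph{upper} bound for $\omega_a^2$ in terms of the pairwise admissible intersections. The paper uses the nonnegativity of the N\'eron--Tate height of the degree-zero class $n\omega-(2g-2)F$, with $F=\sum P_i$; after expanding, applying adjunction, and feeding in the parallelogram bound, this yields
\[
\omega_a^2\ \le\ -\frac{4(g-1)g}{n(n-1)\bigl(1-4(g^2-1)\epsilon\bigr)}\sum_{i\ne j}(P_{i,a},P_{j,a}).
\]
Only then does the Elkies-type estimate $\sum_{i\ne j}(P_{i,a},P_{j,a})\ge -n\,c'(g)\sum_v\varphi(\Gamma_v)$ combine with the lower bound above to sandwich $\sum_v\varphi(\Gamma_v)$ and give $n-1\le \tfrac{4g(g-1)(2g+1)c'(g)}{\max(2,g-1)(1-4(g^2-1)\epsilon)}$. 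Your route, comparing an upper and a putative lower bound on $\sum_{i\ne j} h_{\mathrm{NT}}(P_i-P_j)$ directly, does not reach this structure.

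Finally, the factor-of-two improvement when $\mathrm{char}\,k=0$ or $X$ is hyperelliptic is \emph{not} an involution-pairing argument on the set of small points. It comes from a sharper constant in the Hodge-index step: in those cases one has $\omega_a^2\ge \tfrac{2(g-1)}{2g+1}\sum_v\varphi(\Gamma_v)$ (via Zhang's work on the Gross--Schoen cycle in characteristic~$0$, and an exact identity in the hyperelliptic case), which doubles the lower bound for $\omega_a^2$ and hence halves $c(g,\epsilon)$. Similarly, in the everywhere-potentially-good-reduction case the conclusion $n\le 1$ is not forced by Elkies (which becomes vacuous when every $\Gamma_v$ is a point) but by the fact that then $\sum_{i\ne j}(P_{i,a},P_{j,a})\ge 0$, forcing $\omega_a^2\le 0$ from the displayed upper bound, while Par\v{s}in/Szpiro give $\omega_a^2>0$ for non-isotrivial $X$.
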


The proof makes heavy use of Zhang's admissible pairing introduced in \cite{Zha93}. Replacing $K$ by a finite field extension $K'$ and $B$ by its normalization in $K'$, we may and will assume that $X$ has semistable reduction over $B$. If $F$ denotes the divisor of points $P_1,\dots,P_s\in X(\overline{K})$, which are mapped to points of small N\'eron--Tate height by $j_D$, then the non-negativity of the N\'eron--Tate height of $s\omega-(2g-2)F$ gives a bound of the admissible self-intersection $\omega_a^2$ in terms of the Green functions on the metrized reduction graphs on the pairs of the points of $F$. By an application of the arithmetic Hodge index theorem on $X^2$, we can bound $\omega_a^2$ from below by the Zhang invariant $\varphi$ of the metrized reduction graphs. We also obtain an upper bound for the sum of the Green functions in the pairs of points of $F$ in terms of $\varphi$ through the metrized graph analogue of Elkies' bound for the Green function and an estimate of the supremum of the Green function on metrized graphs. We may summarize these bounds by
\begin{align*}
	\tfrac{\max(2,g-1)}{2g+1}\varphi(X)\le \omega_a^2&\le -\tfrac{4(g-1)g}{s(s-1)(1-4(g^2-1)\epsilon)}\sum_{j\neq k}^s \sum_{v\in|B|}g_v(R_v(P_j),R_v(P_k))\\
	&\le \tfrac{4(g-1)gc'(g)}{(s-1)(1-4(g^2-1)\epsilon)}\varphi(X),
\end{align*}
where $\varphi(X)=\sum_{v\in |B|}\varphi(\Gamma_v(X))$ is the sum of Zhang's invariant $\varphi$ for all metrized reduction graphs $\Gamma_v(X)$ of $X$ at the closed points $v\in |B|$, $g_v$ denotes the Green function on $\Gamma_v(X)$ and $R_v\colon X(\overline{K})\to \Gamma_v(X)$ is the reduction map. The constant $c'(g)$ is explicitly given in Lemma \ref{lem_sup}. From this bound it is possible to deduce a bound of $s$ in terms of $g$ and $\epsilon$ independently of $K$.

The lower bound $\omega_a^2\ge \frac{g-1}{2g+1}\varphi(X)$ of the admissible self-intersection number $\omega_a^2$ was already obtained in the case of number fields by the third author in \cite[Theorem 1.2]{Wil19}. It can be proven over function fields by exactly the same arguments. Since the arguments are widely dispersed over the literature and are often only formulated for number fields, however, we decided to include a more self-contained proof here.

\subsection*{Outline}
In Section \ref{sec_graphs} we recall the theory of polarized metrized graphs. We give the definitions of the required invariants and we bound the Green function of a polarized metrized graph in terms of Zhang's invariant $\varphi$. We recall the notion of adelic metrics in Section \ref{sec_adelic}. In Sections \ref{sec_admissible} and \ref{sec_jacobian} we discuss the admissible adelic metrics for line bundles on the curve $X$ and on its Jacobian $J$ as well as one of their relations. In Section \ref{sec_hodgeindex} we deduce the lower bound for the admissible self-intersection number $\omega_a^2$ from the arithmetic Hodge index theorem on $X^2$. Finally, we give the proof of Theorems \ref{mainthm} and \ref{thm_bogomolov} in Section \ref{sec_proof} and the proof of Corollary \ref{cor} in Section \ref{sec_cor}.

\section{Polarized metrized graphs}\label{sec_graphs}
In this section we discuss the notion of polarized metrized graphs as introduced by Zhang \cite{Zha93} and Chinburg--Rumely \cite{CR93}, see also \cite{BR07}. In particular, we are interested in the sum of the canonical Green function of a polarized metrized graph over pairs of points in a fixed set of points and we will estimate it in terms of Zhang's $\varphi$-invariant.

A \emph{metrized graph} is a compact and connected nonempty metric space $\Gamma$, which is locally around any point $p\in\Gamma$ isometric to the star-shaped set
\[S(n_p,r_p)=\left\{z\in \mathbb{C}~|~z=t e^{2\pi i k/n_p}\text{ for some } 0\le t\le r_p\text{ and } k\in\mathbb{Z}\right\},\]
for some integer $n_p\ge 1$, the \emph{valency} of $p$, and some real number $r_p>0$, or to a single point, in which case we set $n_p=0$. A \emph{vertex set} for $\Gamma$ is a nonempty finite set $V\subseteq \Gamma$, such that $\{p\in\Gamma~|~n_p\neq 2\}\subseteq V$ and the closure of any connected component of $\Gamma\setminus V$ intersects $V$ in exactly two points. We fix a vertex set $V$ and we write $e_1,\dots,e_r$ for the connected components of $\Gamma\setminus V$, which are just open line segments. Let $\ell(e)$ be the length of any line segment $e$ and write 
\[\delta(\Gamma)=\sum_{i=1}^r \ell(e_i)\]
for the total length of $\Gamma$.

Let us describe how we associate a metrized graph to a connected classical graph $(V,E)$, consisting of a finite set $V$ of vertices and a non-empty set of edges $E\subseteq V\times V$, which is additionally equipped with a length function $\ell\colon E\to \mathbb{R}_{> 0}$. For $i\in \{1,2\}$ we write $\operatorname{pr}_i\colon E\to V$ for the projection to the $i$-th factor. For each $e\in E$ let $I_e$ be a metric space equipped with an isometry $\varphi_e\colon [0,\ell(e)]\to I_e$. We define an equivalence relation $\sim$ on the disjoint union $\bigsqcup_{e\in E}I_e$ by setting $x\sim y$ for two points $x\in I_e$ and $y\in I_{e'}$ if there exist $i,j\in\{1,2\}$ and a vertex $v\in \operatorname{pr}_i(e)\cap\operatorname{pr}_j(e')$ such that
\[x=\varphi_e((i-1)\cdot\ell(e))\qquad \text{and}\qquad y=\varphi_{e'}((j-1)\cdot\ell(e')).\]
In other words, the end points of the edges represented by the metric spaces $I_e$ are identified if they meet in the same vertex.
Now the metrized graph associated to $(V,E)$ is given by the metric space
\[\Gamma_{(V,E)}=\left(\bigsqcup_{e\in E}I_e\right)/\sim.\]
Although we started with a directed graph $(V,E)$, one can check that $\Gamma_{(V,E)}$ does not depend on the choice of the direction. We have a canonical inclusion $V\subseteq \Gamma_{(V,E)}$ by sending $v\in V$ to the class of any $x\in I_e$ such that there is an $i\in\{1,2\}$ with $v=\operatorname{pr}_i(e)$ and $x=\varphi_e((i-1)\cdot\ell(e))$.
If $(V,E)$ is a connected graph with $E=\emptyset$, we set $\Gamma_{(V,E)}$ to be a point. We again get a canonical inclusion $V\subseteq \Gamma_{(V,E)}$.

Next, we would like to define the tangent space of $\Gamma$ at a point $p\in\Gamma$. By a path in $\Gamma$ we mean a length-preserving continuous map $\gamma\colon [0,l]\to \Gamma$ for some $l>0$. We say that two paths $\gamma_1,\gamma_2$ are equivalent, written as $\gamma_1\sim \gamma_2$, if there exists an $\epsilon>0$ with $\gamma_1(t)=\gamma_2(t)$ for all $0\le t<\epsilon$. The tangent space $T_p(\Gamma)$ of $\Gamma$ at $p$ is given by
\[T_p(\Gamma)=\{\gamma \text{ path in } \Gamma \text{ with } \gamma(0)=p\}/\sim.\]
For $v\in T_p(\Gamma)$, represented by a path $\gamma$, and $f:\Gamma\to\mathbb{R}$, the \emph{one-sided directional derivative} at $p$ is defined by
\[d_v f(p)=\lim_{t\to 0^+}\frac{f(\gamma(t))-f(p)}{t}\]
if the limit exists.
For the functions
\begin{align*}
	\operatorname{Zh}(\Gamma)=\{f\colon \Gamma\to\mathbb{R}~|~&f \text{ continuous, piecewise } C^2 \text{ and} \\
	&d_v f(p) \text{ exists for all } p\in\Gamma, v\in T_p(\Gamma)\}
\end{align*}
we recall Zhang's definition \cite[Appendix]{Zha93} of the Laplace operator
\[\Delta(f)=-f''dx-\sum_{p\in\Gamma}\left(\sum_{v\in T_p(\Gamma)}d_v f(p)\right)\delta_p.\]
Here, $\delta_p$ is the Dirac measure at $p$ and $\Delta(f)$ is considered as a signed measure on $\Gamma$.

For any bounded signed measure $\mu$ of total mass $1$ on $\Gamma$ there exists a unique continuous and symmetric function $g_\mu\colon\Gamma\times\Gamma\to \mathbb{R}$, called the \emph{Green function} associated to $\mu$, such that $g_\mu(x,\cdot)\in \operatorname{Zh}(\Gamma)$ is characterized by
\[\Delta_y g_\mu(x,y)=\delta_x(y)-\mu(y) \quad\text{and}\quad \int_{\Gamma}g_\mu(x,y)\mu(y)=0\]
for all $x\in \Gamma$. Further, $r(p,q)=g_{\delta_p}(q,q)$ is called the \emph{resistance function} and measures the effective resistance between $p$ and $q$ if we consider $\Gamma$ as a network, where $\ell(e)$ is the resistance along any edge $e$.

By a \emph{divisor} on $\Gamma$ we mean any finite formal sum $D=\sum_{p\in\Gamma} m_p p$ for some integers $m_p\in\mathbb{Z}$. A \emph{polarization} on $\Gamma$ is an effective divisor $K=\sum_{p\in\Gamma}(n_p+2m_p-2)p$ with $m_p\ge 0$ and $m_p=0$ for all but finitely many $p\in\Gamma$. We call $(\Gamma,K)$ a \emph{polarized metrized graph} of genus $g=\frac{1}{2}\deg K+1$. In the following we fix a polarization $K$ on $\Gamma$. For any function $f\colon \Gamma\to \mathbb{R}$ and any divisor $D=\sum_{p\in\Gamma} m_p p$ on $\Gamma$ we write $f(D)=\sum_{p\in\Gamma}m_p f(p)$.

By \cite[Theorem 3.2]{Zha93} there exists a unique positive measure $\mu_K$ of volume $1$ on $\Gamma$ such that
\begin{align}\label{equ_c}
	c(\Gamma,K)=g_{\mu_K}(x,K)+g_{\mu_K}(x,x)
\end{align}
is independent of $x\in\Gamma$. We call $g=g_{\mu_K}$ the \emph{canonical Green function} associated to $(\Gamma,K)$. 
Let us recall the following invariants of $(\Gamma, K)$: The $\epsilon$-invariant
\[\epsilon(\Gamma,K)=\int_{\Gamma} g(x,x)((2g-2)\mu_K+\delta_{K})\]
defined by Zhang in \cite[Theorem 4.4]{Zha93} and the $\varphi$-invariant
\[\varphi(\Gamma, K)=-\tfrac{1}{4}\delta(\Gamma)+\tfrac{1}{4}\int_\Gamma g(x,x)((10g+2)\mu_K-\delta_K)\]
defined by Zhang in \cite[Theorem 1.3.1]{Zha10}. 
The main goal of this section is to prove the following proposition.
\begin{Pro}\label{pro_elkies}
	Let $(\Gamma,K)$ be any polarized metrized graph and $x_1,\dots, x_s\in \Gamma$ any points. Then
	\[\sum_{j\neq k}^s g(x_j,x_k)\ge -sc'(g) \varphi(\Gamma,K),\]
	where $c'(2)=15/4$, $c'(3)=140/51$ and $c'(g)=\frac{8g^4+18g^2-13g-1}{2g(2g+1)(g-1)^2}$ for $g\ge 4$.
	If $\Gamma$ is a tree, we may replace $c'(g)$ by $c'^{tr}(g)=\frac{2g^2-3g+2}{2g(2g-2)}$.
\end{Pro}
Before we prove the proposition we state two lemmas.
The first lemma expresses the invariant $c(\Gamma,K)$ in terms of the invariants $\varphi(\Gamma,K)$, $\delta(\Gamma)$ and $\epsilon(\Gamma, K)$.
\begin{Lem}\label{lem_c}
	Let $(\Gamma,K)$ be any polarized metrized graph. The invariant $c(\Gamma,K)$ can be expressed by
	\[c(\Gamma,K)=\tfrac{1}{12g}(4\varphi(\Gamma,K)+\delta(\Gamma)+\epsilon(\Gamma,K)).\]
\end{Lem}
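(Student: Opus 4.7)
The plan is to compute both $\epsilon(\Gamma,K)$ and $\varphi(\Gamma,K)$ in terms of the same two quantities
$$A=\int_\Gamma g(x,x)\,\mu_K(dx),\qquad B=\int_\Gamma g(x,x)\,\delta_K(dx)=\sum_{p}(n_p+m_p-2)g(p,p),$$
and then to check the identity by algebraic manipulation. The key preliminary step is to identify $c(\Gamma,K)$ with $A$.

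First I would exploit the fact that $c(\Gamma,K)=g(x,K)+g(x,x)$ is, by \eqref{equ_c}, independent of the point $x\in\Gamma$. Integrating this constant against the probability measure $\mu_K$ yields
$$c(\Gamma,K)=\int_\Gamma g(x,K)\,\mu_K(dx)+\int_\Gamma g(x,x)\,\mu_K(dx).$$
The first integral vanishes: expanding $g(x,K)=\sum_p(n_p+m_p-2)g(x,p)$, the symmetry of $g$ together with the normalization $\int_\Gamma g(p,y)\,\mu_K(dy)=0$ forces each summand to be zero. Hence $c(\Gamma,K)=A$.

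Next I would read off from the definitions given just above the lemma that
$$\epsilon(\Gamma,K)=(2g-2)A+B,\qquad 4\varphi(\Gamma,K)=-\delta(\Gamma)+(10g+2)A-B.$$
Adding these two equalities and moving $\delta(\Gamma)$ across gives
$$4\varphi(\Gamma,K)+\delta(\Gamma)+\epsilon(\Gamma,K)=(10g+2)A+(2g-2)A=12g\cdot A=12g\cdot c(\Gamma,K).$$
Dividing by $12g$ yields the claimed formula.

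There is no substantive obstacle here: once one has the trivial but essential observation $c(\Gamma,K)=A$, the lemma is a pure bookkeeping identity between the integrals defining $\epsilon$ and $\varphi$. The only thing requiring care is the normalization convention for $g=g_{\mu_K}$, namely that $\int_\Gamma g(x,y)\,\mu_K(dy)=0$ for every fixed $x$, which by symmetry also gives $\int_\Gamma g(x,p)\,\mu_K(dx)=0$ for every fixed point $p\in\Gamma$; this is the one input used in the computation above.
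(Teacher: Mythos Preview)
Your proof is correct and follows essentially the same route as the paper: integrate the defining identity for $c(\Gamma,K)$ against $\mu_K$ to obtain $c(\Gamma,K)=\int_\Gamma g(x,x)\,\mu_K$, then add the expressions for $\epsilon(\Gamma,K)$ and $4\varphi(\Gamma,K)+\delta(\Gamma)$ so that the $\int_\Gamma g(x,x)\,\delta_K$ terms cancel. Your version is slightly more explicit in justifying why $\int_\Gamma g(x,K)\,\mu_K$ vanishes, but otherwise the arguments are identical.
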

\begin{proof}
	If we integrate Equation (\ref{equ_c}) with respect to $\mu_K$, we obtain by the definition of the Green function $g$ that
	\[c(\Gamma,K)=\int_\Gamma g(x,x)\mu_K.\]
	Hence, by the defining equations for the invariants $\epsilon(\Gamma,K)$ and $\varphi(\Gamma,K)$ we get
	\begin{align*}
		\epsilon(\Gamma,K)&=(2g-2)c(\Gamma,K)+\int_{\Gamma} g(x,x)\delta_{K},\\
		4\varphi(\Gamma, K)+\delta(\Gamma)&=(10g+2)c(\Gamma,K)-\int_\Gamma g(x,x)\delta_K.
	\end{align*}
	Taking the sum of both equations yields
	\[12g c(\Gamma,K)=4\varphi(\Gamma,K)+\delta(\Gamma)+\epsilon(\Gamma,K),\]
	which proves the lemma after dividing by $12g$ on both sides.
\end{proof}
Analogously to Equation (\ref{equ_c}), Baker and Rumely proved in \cite[Theorem 14.1]{BR07} that there also exists a unique bounded signed measure $\mu_0$ of total mass $1$ on $\Gamma$ such that 
\[\tau(\Gamma):=g_{\mu_0}(x,x)\]
is a constant depending only on the metrized graph $\Gamma$. They show that it can be expressed by
\begin{align}\label{equ_tau}
	\tau(\Gamma)=g_{\mu_0}(x,y)+\tfrac{1}{2}r(x,y)
\end{align}
for any points $x,y\in\Gamma$.
De Jong \cite[Proposition 9.2]{dJo18} further proved the relation
\begin{align}\label{equ_tau-dejong}
	\tau(\Gamma)=\tfrac{1}{12}(\delta(\Gamma)+4\varphi(\Gamma,K)-2\epsilon(\Gamma,K)).
\end{align}
Moreover, Baker and Rumely have proven the following analogue of Elkies' lower bound for the Green function in \cite[Proposition 13.7]{BR07}
\begin{align}\label{equ_br-elkies}
	\sum_{j\neq k}^s g_\mu(x_j,x_k)\ge -s\cdot \sup_{x\in\Gamma}g_\mu(x,x),
\end{align}
where $\mu$ is any measure of volume $1$ on $\Gamma$ and $x_1,\dots,x_s\in \Gamma$ are any points. Although their statement differs from the above inequality, their proof gives exactly (\ref{equ_br-elkies}). For $s=2$ and $\mu=\mu_0$ this implies $g_{\mu_0}(x,y)\ge -\tau(\Gamma)$. In combination with Equations (\ref{equ_tau}) and (\ref{equ_tau-dejong}) we get
\begin{align}\label{equ_rbound}
	r(x,y)\le 4\tau(\Gamma)=\tfrac{1}{3}(\delta(\Gamma)+4\varphi(\Gamma,K)-2\epsilon(\Gamma,K))
\end{align}
for any points $x,y\in\Gamma$.

Next, we note that we have 
\begin{align}\label{inequ_cinkir}
	\delta(\Gamma)\le c_C(g)\varphi (\Gamma, K)
\end{align}
with $c_C(2)=27$, $c_C(3)=288/17$ and $c_C(g)=\tfrac{2g(7g+5)}{(g-1)^2}$ for $g\ge 4$ due to \c{C}inkir \cite[Theorem 2.11]{Cin11}, respectively \cite{Cin15} for $g=3$. If $\Gamma$ is a tree, we may even replace $c_C(g)$ by $c_C^{tr}(g)=g/(2g-2)$.
The following Lemma gives a bound for the canonical Green function in terms of the $\varphi$-invariant.
\begin{Lem}\label{lem_sup}
	For any polarized metrized graph $(\Gamma,K)$ and any $x,y\in\Gamma$, one has
	\[g(x,y)\le \frac{(4g-3)\delta(\Gamma)+(16g-12)\varphi(\Gamma,K)-(8g-3)\epsilon(\Gamma,K)}{12g}\le c'(g)\varphi(\Gamma,K),\]
	with $c'(g)$ as in Proposition \ref{pro_elkies}.
	If $\Gamma$ is a tree, we may replace $c'(g)$ by $c'^{tr}(g)$ with $c'^{tr}(g)$ as in Proposition \ref{pro_elkies}.
\end{Lem}
\begin{proof}
	Let us first show that $\sup_{y\in\Gamma} g_{\mu}(x,y)=g_{\mu}(x,x)$ for any positive measure $\mu$ on $\Gamma$ of volume $1$. If we fix two points $x,z\in \Gamma$, then
	\[\Delta_y(g_\mu(x,y)-g_\mu(z,y))=\delta_x(y)-\delta_z(y).\]
	Thus, $f(y)=g_\mu(x,y)-g_\mu(z,y)$ is a piecewise linear function. For every $p\in\Gamma\setminus\{x,z\}$, one has the identity $\sum_{v\in T_p(\Gamma)}d_vf(p)=0$. Hence at any $p\in\Gamma\setminus\{x,z\}$, the function $f$ is either locally constant or has no local extremum. Further, $\sum_{v\in T_x(\Gamma)}d_vf(x)=-1$ and $\sum_{v\in T_z(\Gamma)}d_vf(z)=1$. We conclude that $f$ attains its global maximum at $x$. Thus,
	\[g_\mu(x,x)-g_\mu(z,x)=f(x)=\sup_{y\in \Gamma}f(y)\ge \int_{\Gamma}f(y)\mu(y)=0,\] where the inequality uses the positivity of $\mu$.
	This proves that $g_{\mu}(z,x)\le g_{\mu}(x,x)$ for all $x,z\in \Gamma$.
	Hence, by the positivity of $\mu_K$ (see (\ref{equ_c})), it is enough to bound $\sup_{x\in\Gamma} g(x,x)$. 
	
	By \cite[Lemma 4.1]{Mor96}, we have
	\[g(x,x)=c(\Gamma,K)-g(x,K)=c(\Gamma,K)+\frac{r(x,K)-\epsilon(\Gamma,K)}{2g}.\]
	As $K$ is effective of degree $2g-2$, an application of the bound (\ref{equ_rbound}) yields
	\[\sup_{x,y\in\Gamma}g(x,y)\le c(\Gamma,K)+\frac{(2g-2)\delta(\Gamma)+(8g-8)\varphi(\Gamma,K)-(4g-1)\epsilon(\Gamma,K)}{6g}.\]
	Now the first inequality in the lemma follows by Lemma \ref{lem_c}.
	
	To bound this expression in terms of $\varphi(\Gamma,K)$, the direct way would be to use \c{C}inkir's inequality (\ref{inequ_cinkir}) and $\epsilon(\Gamma,K)\ge 0$. To obtain a slightly better bound, we also recall from \c{C}inkir's work \cite[Theorem 2.13]{Cin11} that the invariant 
	\[\lambda(\Gamma,K)=\tfrac{g-1}{6(2g+1)}\varphi(\Gamma,K)+\tfrac{1}{12}(\epsilon(\Gamma,K)+\delta(\Gamma))\]
	satisfies $(8g+4)\lambda(\Gamma,K)\ge g\delta(\Gamma)$. This can be rewritten as
	\[-(2g+1)\epsilon(\Gamma,K)\le 2(g-1)\varphi(\Gamma,K)-(g-1)\delta(\Gamma).\]
	Using this, we compute the estimate
	\begin{align*}
		&\frac{(4g-3)\delta(\Gamma)+(16g-12)\varphi(\Gamma,K)-(8g-3)\epsilon(\Gamma,K)}{12g}\\
		&\le \frac{(3g-2)\delta(\Gamma)+(16g^2-10g-2)\varphi(\Gamma,K)}{4g(2g+1)}\\
		&\le \frac{(3g-2)c_C(g)+16g^2-10g-2}{4g(2g+1)}\varphi(\Gamma,K).
	\end{align*}
	If we denote the coefficient on $\varphi(\Gamma,K)$ in the last expression by $c'(g)$, we can explicitly write it as
	$c'(2)=15/4$, $c'(3)=140/51$ and $c'(g)=\frac{8g^4+18g^2-13g-1}{2g(2g+1)(g-1)^2}$ for $g\ge 4$.
	If $\Gamma$ is a tree, then we have $2\varphi(\Gamma,K)=\delta(\Gamma)+\epsilon(\Gamma,K)$ by \cite[Equation~(1.4)]{dJS18}, as the Jacobian $\operatorname{Jac}(\Gamma)$ of a tree $\Gamma$ is trivial. We compute
	\begin{align*}
		&\frac{(4g-3)\delta(\Gamma)+(16g-12)\varphi(\Gamma,K)-(8g-3)\epsilon(\Gamma,K)}{12g}=\frac{(2g-1)\delta(\Gamma)-\varphi(\Gamma,K)}{2g}\\
		& \le \frac{(2g-1)c^{tr}_C(g)-1}{2g}\varphi(\Gamma,K)=\frac{2g^2-3g+2}{2g(2g-2)}\varphi(\Gamma,K).\qedhere
	\end{align*}
	
\end{proof}
Proposition \ref{pro_elkies} follows by combining the bound in (\ref{equ_br-elkies}) for $\mu=\mu_K$ with Lemma \ref{lem_sup}.

\section{Adelic metrics}\label{sec_adelic}
We recall the notion of adelic metrics on line bundles in this section. We refer to \cite[Section 1]{Zha95} and \cite[Section 2]{Car20} for additional details. Let $K=k(B)$ be the function field of a smooth projective connected curve $B$ defined over an algebraically closed field $k$. Let $Y$ be a smooth projective variety over $K$ and $\mathcal{L}$ any line bundle on $Y$. For any closed point $v\in |B|$ we denote $\overline{K}_v$ for the algebraic closure of the completion of $K$ with respect to $v$. This is a valuation field and we denote its valuation ring by $\mathcal{O}_{\overline{K}_v}$. We write $Y_v$ and $\mathcal{L}_v$ for the pullbacks of $Y$ and $\mathcal{L}$ induced by the embedding $K\to\overline{K}_v$.

By a \emph{metric} $\|\cdot\|$ on $\mathcal{L}_v$ we mean a collection of $\overline{K}_v$-norms $\|\cdot\|_y$ on $y^*\mathcal{L}$ for every $y\in Y(\overline{K}_v)$. 
An important example is the model metric $\|\cdot\|_{\widetilde{\mathcal{L}_v}}$ associated to any projective flat model $(\widetilde{Y},\widetilde{\mathcal{L}_v})$ of $(Y,\mathcal{L}_v^{\otimes e})$ over $\mathcal{O}_{\overline{K}_v}$ for any integer $e>0$, which is given by
\[\|l\|_{\widetilde{\mathcal{L}_v},y}=\inf_{a\in \overline{K}_v}\left\{|a|^{1/e}~\left|~l\in a\widetilde{y}^*\widetilde{\mathcal{L}_v}\right.\right\}\]
for any $y\in Y(\overline{K}_v)$, where $\widetilde{y}\in \widetilde{Y}(\mathcal{O}_{\overline{K}_v})$ denotes its unique extension. We call a metric $\|\cdot\|$ on $\mathcal{L}_v$ \emph{continuous and bounded} if there exists a model $(\widetilde{Y},\widetilde{\mathcal{L}_v})$, such that $\log\frac{\|\cdot\|}{\|\cdot\|_{\widetilde{\mathcal{L}_v}}}$ is continuous and bounded.

We call a collection of metrics $\|\cdot\|=\{\|\cdot\|_v~|~v\in |B|\}$ an \emph{adelic metric} on $\mathcal{L}$ if $\|\cdot\|_v$ is a continuous and bounded metric on $\mathcal{L}_v$ for all $v\in |B|$ and there is an open non-empty subset $U\subseteq B$ and a model $(\widetilde{Y},\widetilde{\mathcal{L}})$ of $(Y,\mathcal{L})$ over $U$ such that $\|\cdot\|_v=\|\cdot\|_{\widetilde{\mathcal{L}}_v}$ for every $v\in U$, where $\widetilde{\mathcal{L}}_v$ denotes the pullback of $\widetilde{\mathcal{L}}$ along $\operatorname{Spec}(\mathcal{O}_{\overline{K}_v})\to U$. A pair $\overline{\mathcal{L}}=(\mathcal{L},\|\cdot\|)$ of a line bundle $\mathcal{L}$ and an adelic metric $\|\cdot\|$ on $\mathcal{L}$ is called an \emph{adelic line bundle}. An isometry between two adelic line bundles $\overline{\mathcal{L}}_1=(\mathcal{L}_1,\|\cdot\|_1)$ and $\overline{\mathcal{L}}_2=(\mathcal{L}_2,\|\cdot\|_2)$ is an isomorphism of line bundles $\mathcal{L}_1\cong\mathcal{L}_2$, which induces an isometry between the metrized line bundles $(\mathcal{L}_{1,v},\|\cdot\|_{1,v})$ and $(\mathcal{L}_{2,v},\|\cdot\|_{2,v})$ on $Y_v$ for all $v\in |B|$.

We call an adelic line bundle $\overline{\mathcal{L}}$ \emph{nef} if there is a sequence of models $(\widetilde{Y}_n,\widetilde{\mathcal{L}}_n)$ of $(Y,\mathcal{L}^{\otimes e_n})$ for some $e_n\in\mathbb{Z}_{\ge 1}$ with $\widetilde{\mathcal{L}}_n$ nef on $\widetilde{Y}_n$, such that $\log\frac{\|\cdot\|_{\widetilde{\mathcal{L}}_n,v}}{\|\cdot\|_v}$ converges to $0$ uniformly in $Y(\overline{K}_v)$ for all $v\in |B|$ and there is a non-empty open subset $U\subseteq B$ such that $\log\frac{\|\cdot\|_{\widetilde{\mathcal{L}}_n,v}}{\|\cdot\|_v}$ is identically $0$ for all $v\in|U|$.  Furthermore, we call an adelic line bundle $\overline{\mathcal{L}}$ \emph{integrable} if there exist nef adelic line bundles $\overline{\mathcal{L}}_1$ and $\overline{\mathcal{L}}_2$ and an isometry $\overline{\mathcal{L}}\cong \overline{\mathcal{L}}_1\otimes\overline{\mathcal{L}}_2^{\otimes-1}$.
In what follows we will use the additive notation $\overline{\mathcal{L}}_1+\overline{\mathcal{L}}_2:=\overline{\mathcal{L}}_1\otimes \overline{\mathcal{L}}_2$ and $n\overline{\mathcal{L}}:=\overline{\mathcal{L}}^{\otimes n}$ for $n\in \mathbb{Z}$.

Next we define intersection numbers of integrable line bundles. Let $Z\subseteq Y$ be a subvariety of dimension $d=\dim Z$ and let $\overline{\mathcal{L}}_0,\dots,\overline{\mathcal{L}}_d$ be integrable line bundles on $Y$. We choose rational sections $l_0,\dots,l_d$ of $\overline{\mathcal{L}_0},\dots,\overline{\mathcal{L}_d}$ such that $\bigcap_{i=0}^d\operatorname{supp}(\operatorname{div}(l_i))$ does not intersect $Z$. Fix a $v\in |B|$. If the metric $\|\cdot\|_v$ on $\overline{\mathcal{L}}_i$ is induced by a model $(\widetilde{Y}_v,\widetilde{\mathcal{L}}_{i,v})$ of $(Y,\mathcal{L}_i^{\otimes e})$ for all $i$ and some $e>0$, we define the local intersection number at $v$ by the usual intersection number on $\widetilde{Y}_v$
\[\left(\widehat{\operatorname{div}}(l_0)\cdots\widehat{\operatorname{div}}(l_d)\cdot[Z]\right)_v=\operatorname{div}_{\widetilde{Y}_v}(\widetilde{l}_0|_{Y_v})\cdot\ldots\cdot \operatorname{div}_{\widetilde{Y}_v}(\widetilde{l}_d|_{Y_v})\cdot[\widetilde{Z}_v]/e^{d+1},\]
where $\widetilde{Z}_v$ denotes the Zariski closure of $Z_v$ in $\widetilde{Y}_v$ and we write $\widetilde{l}_i$ for the section of $\widetilde{\mathcal{L}}_i$ extending $l_i^{\otimes e}$. In general, we assume the local intersection number $\left(\widehat{\operatorname{div}}(l_0)\cdots\widehat{\operatorname{div}}(l_d)\cdot [Z]\right)_v$ at $v$ varies continuously with respect to the metrics on the line bundles. As the metric on $\overline{\mathcal{L}}_i$ at $v$ is the limit of metrics induced by models for all $v\in |B|$ and all $i$, we can define the global intersection number by
\[\overline{\mathcal{L}}_0\cdots\overline{\mathcal{L}}_d\cdot Z=\sum_{v\in |B|}\left(\widehat{\operatorname{div}}(l_0)\cdots\widehat{\operatorname{div}}(l_d)\cdot [Z]\right)_v.\]
If $\dim Z=0$ we write $\widehat{\deg}(\overline{\mathcal{L}}_0|_Z)=\overline{\mathcal{L}}_0\cdot Z$ and if $Z=Y$ we write $\overline{\mathcal{L}}_0\cdots\overline{\mathcal{L}}_d=\overline{\mathcal{L}}_0\cdots\overline{\mathcal{L}}_d\cdot Z$ by way of abbreviation. We remark that the global intersection number thus defined is independent of the choice of rational sections $l_i$.

If moreover $\operatorname{div}(l_d)$ is a prime divisor on $Y$ such that $l_i|_{\operatorname{div}(l_d)}\neq 0$ for all $i<d$, the intersection number can be computed recursively by the formula
\begin{align}\label{equ_intersection}
	\overline{\mathcal{L}}_0\cdots\overline{\mathcal{L}}_d=\overline{\mathcal{L}}_0|_{\operatorname{div}(l_d)}\cdots\overline{\mathcal{L}}_{d-1}|_{\operatorname{div}(l_d)}-\sum_{v\in|B|}\int_{Y(\overline{K}_v)} \log\|l_d\|_vc_1(\overline{\mathcal{L}}_0)\cdots c_1(\overline{\mathcal{L}}_{d-1}),
\end{align}
where the integral is defined as follows: First, let each adelic line bundle $\overline{\mathcal{L}}_i$ be induced by a model $(\widetilde{Y},\widetilde{\mathcal{L}}_i)$ of $(Y,\mathcal{L}_i^{\otimes e})$. Then $V=\operatorname{div}(\widetilde{l}_d)-e\cdot\overline{\operatorname{div}(l_d)}$ is a Weil divisor $V=\sum_{v\in |B|}V_v$ supported on the closed fibers of $\widetilde{Y}$ and we set
\[\int_{Y(\overline{K}_v)} \log\|l_d\|_vc_1(\overline{\mathcal{L}}_0)\cdots c_1(\overline{\mathcal{L}}_{d-1})=c_1(\widetilde{\mathcal{L}}_0)\cdots c_1(\widetilde{\mathcal{L}}_{d-1})[V_v]/e^{d+1}.\]
In general the integral is defined by continuity and taking limits.

Let $f\colon Y\to S$ be any proper morphism of smooth projective varieties over $K$ and let $\overline{\mathcal{L}}_0,\dots,\overline{\mathcal{L}}_d$ be integrable line bundles on $S$. Then we have a projection formula
\begin{align}\label{equ_projection1}
	\overline{\mathcal{L}}_0\cdots\overline{\mathcal{L}}_d\cdot f_*(Z)=f^*\overline{\mathcal{L}}_0\cdots f^*\overline{\mathcal{L}}_d\cdot Z.
\end{align}
For model metrics this formula follows from the projection formula in classical intersection theory, see for example \cite[Example 2.4.3]{Ful98}. In general it follows by taking limits. In the same way one proves, that if $K\subseteq K'$ is a field extension, $Y_{K'}=Y\otimes_K K'$ the base change of $Y$ over $K'$ and $\alpha\colon Y_{K'}\to Y$ the induced map, then
\begin{align}\label{equ_projeciton-field-extension}
	\overline{\mathcal{L}}_0\cdots\overline{\mathcal{L}}_d\cdot \alpha_*(Z')=\alpha^*\overline{\mathcal{L}}_0\cdots \alpha^*\overline{\mathcal{L}}_d\cdot Z'
\end{align}
for any integrable line bundles $\overline{\mathcal{L}}_0,\dots,\overline{\mathcal{L}}_d$ on $Y$ and any subvariety $Z'\subseteq Y_{K'}$.
If $s=\dim S$ is the dimension of $S$ and $\overline{\mathcal{M}}_0,\dots,\overline{\mathcal{M}}_s$ are integrable line bundles on $S$, we obtain another projection formula
\begin{align}\label{equ_projection2}
	f^*\overline{\mathcal{M}}_0 \cdots f^*\overline{\mathcal{M}}_s\cdot \overline{\mathcal{L}}_1\cdots \overline{\mathcal{L}}_{d-s}=\overline{\mathcal{M}}_0\cdots\overline{\mathcal{M}}_s\cdot \left(c_1(\mathcal{L}_1)\dots c_1(\mathcal{L}_{d-s})[f]\right),
\end{align}
which one can similarly prove first for model metrics via the classical projection formula, and in the general case by taking limits. Here, $c_1(\mathcal{L}_1)\dots c_1(\mathcal{L}_n)[f]$ denotes the multidegree of the generic fiber of $f$ with respect to the line bundles $\mathcal{L}_1,\dots, \mathcal{L}_n$.

We say that an integrable line bundle $\overline{\mathcal{L}}$ on $Y$ is \emph{numerically trivial} if
\[\overline{\mathcal{L}}\cdot \overline{\mathcal{L}}_1\cdots \overline{\mathcal{L}}_d=0\]
for all integrable line bundles $\overline{\mathcal{L}}_1,\dots,\overline{\mathcal{L}}_d$ on $Y$, where $d=\dim Y$. We call two integrable line bundles $\overline{\mathcal{L}}_1$ and $\overline{\mathcal{L}}_2$ \emph{numerically equivalent} if the difference $\overline{\mathcal{L}}_1-\overline{\mathcal{L}}_2$ is numerically trivial. The intersection number $\overline{\mathcal{L}}_0\cdots\overline{\mathcal{L}}_d$ of integrable line bundles $\overline{\mathcal{L}}_0,\dots,\overline{\mathcal{L}}_d$ only depends on their numerical equivalence classes. To any real number $r\in\mathbb{R}$ and any place $w\in |B|$ we can associate the integrable line bundle $\mathcal{O}_Y(r,w)=(\mathcal{O}_Y,\|\cdot\|_{(r,w)})$ on $Y$, where $\mathcal{O}_Y$ is the trivial bundle on $Y$ and $\|\cdot\|_{(r,w),v}=e^{-\delta_{v,w}r}\|\cdot\|_{v}$ with $\|\cdot\|_{v}$ the canonical metric on $\mathcal{O}_{Y,v}$ and $\delta_{v,w}$ the Kronecker delta. Applying Equation (\ref{equ_intersection}) to the constant section $1\in H^0(Y,\mathcal{O}_Y)$, we get
\[\mathcal{O}_Y(r,w)\cdot \overline{\mathcal{L}}_1\cdots\overline{\mathcal{L}}_d=r\cdot (\mathcal{L}_1\cdots \mathcal{L}_d)\]
for any integrable line bundles $\overline{\mathcal{L}}_1,\dots,\overline{\mathcal{L}}_d$, where $(\mathcal{L}_1\cdots \mathcal{L}_d)$ denotes the classical intersection number of $\mathcal{L}_1,\dots, \mathcal{L}_d$ on $Y$. Thus, the numerical equivalence class of $\mathcal{O}_Y(r,w)$ does not depend on the choice of $w\in|B|$, and we will just write $r$ for its numerical equivalence class.
\section{Admissible metrics on the curve}\label{sec_admissible}
In this section we recall Zhang's admissible pairing on curves introduced in \cite{Zha93}. Let $k$ be an algebraically closed field, $B$ any smooth projective curve over $k$, and $K=k(B)$ its function field. Moreover, let $X$ be a smooth projective geometrically connected curve of genus $g$ over $K$ having semistable reduction over $B$.

First, let us recall the notion of the metrized reduction graph. Let $\pi\colon \mathcal{X}\to B$ be the minimal regular model of $X$ over $B$. The polarized metrized reduction graph $\Gamma_v(X)$ of $X$ at $v\in |B|$ is defined as follows:
\begin{itemize}
	\item The dual graph $(V_v,E_v)$ of the fiber $\mathcal{X}_v=\pi^{-1}(v)$ is given as follows:
	\begin{itemize}
		\item Its vertex set $V_v$ consists of the irreducible components of $\mathcal{X}_v$.
		\item Its edge set $E_v$ consists of the nodes in $\mathcal{X}_v$, such that any edge $e$ connects the two vertices corresponding to the irreducible components meeting in the node corresponding to $e$.
	\end{itemize} 
	\item We denote $\ell\colon E_v\to \mathbb{R}_{\ge 0}$ for the constant function $\ell(e)=1$.
	\item We associate the metrized graph $\Gamma_v(X)=\Gamma_{(V_v,E_v)}$ to $(V_v,E_v)$ and the length function $\ell$ as described in Section \ref{sec_graphs}.
	\item The polarization of $\Gamma_v(X)$ is given by
	\[K(v)=\sum_{p\in V_v}(n_p+2g_p-2)p,\]
	where $n_p$ denotes the valency of the vertex $p$ and $g_p$ denotes the genus of the normalization of the irreducible component corresponding to $p\in V_v$.
\end{itemize}
The polarized metrized graph $\Gamma_v(X)=(\Gamma_v(X),K(v))$ has genus $g$. We write $g_v$ for the canonical Green function associated to $\Gamma_v(X)$. There is a canonical map $R_v\colon X(K)\to \Gamma_v(X)$ given as follows: We can uniquely extend any point $P\in X(K)$ to a section $\widetilde{P}\colon B\to\mathcal{X}$ of $\pi$. We set $R_v(P)$ to be the point of $\Gamma_v(X)$ that corresponds to the vertex of the dual graph, which in turn corresponds to the irreducible component of $\mathcal{X}_v$ where $\widetilde{P}$ intersects $\mathcal{X}_v$.

As outlined by Yuan in \cite[Appendix]{Yua21}, Zhang's construction of admissible pairings \cite{Zha93} induces canonical admissible adelic metrics on the canonical bundle $\omega$ on $X$, on the bundle $\mathcal{O}_X(D)$ associated to any divisor $D$ on $X$ and on the diagonal bundle $\mathcal{O}_{X^2}(\Delta)$ on $X^2$. For our applications it is enough to describe the canonical metric on $\mathcal{O}_{X^2}(\Delta)$ for rational points $P,Q\in X(K)$. This metric satisfies
\begin{align}\label{equ_diagonalmetric}
	\log\|1_\Delta(P,Q)\|_v=-i_v(P,Q)-g_{v}(R_v(P),R_v(Q)),
\end{align}
where $v\in |B|$ is any place, $P,Q\in X(K)$ are arbitrary distinct $K$-rational points, $1_\Delta$ denotes the canonical section on $\mathcal{O}_{X^2}(\Delta)$, and $i_v(P,Q)$ is the usual intersection index on $\mathcal{X}_v$ of the unique sections $\widetilde{P}$ and $\widetilde{Q}$ of $\pi$ extending $P$ and $Q$. For the general definition of the canonical metric on $\mathcal{O}_{X^2}(\Delta)$ we refer to \cite[Theorem A.1]{Yua21}.

The canonical admissible metric on $\omega$ is chosen such that the canonical isomorphism $s^*\mathcal{O}_{X^2}(\Delta)\cong \omega^{\otimes-1}$ is an isometry, where $s\colon X\to X^2,~x\mapsto(x,x)$ denotes the diagonal embedding. Similarly, the admissible metric on $\mathcal{O}_X(P)$ for a point $P$ is given such that the canonical isomorphism $s_P^*\mathcal{O}_{X^2}(\Delta)\cong \mathcal{O}_X(P)$ is an isometry, where $s_P\colon X\to X^2,~x\mapsto (x,P)$ is the embedding associated to $P$. We let $\omega_a$, $D_a$ and $\Delta_a$ denote the canonical admissible adelic line bundles $\omega$, $\mathcal{O}_X(D)$ and $\mathcal{O}_{X^2}(\Delta)$. According to \cite[Proposition 3.5.1]{Zha10} these are integrable line bundles.

In general we call an adelic line bundle $\mathcal{L}_a=\overline{\mathcal{L}}$ on $X$ \emph{admissible} if its metric differs only by a constant from the metric on $D_a$ at every place $v\in |B|$, where $D$ is a divisor on $X$ satisfying $\mathcal{O}_X(D)\cong \mathcal{L}$.
The \emph{admissible pairing} $(\mathcal{L}_a,\mathcal{M}_a)$ of two admissible line bundles $\mathcal{L}_a$ and $\mathcal{M}_a$ on $X$ is defined to be the intersection number $\mathcal{L}_a\cdot \mathcal{M}_a$ of the integrable line bundles. For two different points $P,Q\in X(K)$ we deduce from Equation (\ref{equ_diagonalmetric}), that
\begin{align}\label{equ_admissiblepairing}
	(P_a,Q_a)=\sum_{v\in |B|}(i_v(P,Q)+ g_v(R_v(P),R_v(Q))).
\end{align}
Moreover, Zhang obtained in \cite[Theorem 4.4]{Zha93} for the self-intersection number of $\omega_a$
\begin{align}\label{equ_omegaepsilon}
	\omega_a^2=(\omega_a,\omega_a)=\omega_{\mathcal{X}/B}^2-\sum_{v\in |B|}\epsilon(\Gamma_v(X)),
\end{align}
where $\omega_{\mathcal{X}/B}^2$ denotes the usual self-intersection number of the relative dualizing sheaf $\omega_{\mathcal{X}/B}$.

We also recall the adjunction formula \cite[Theorem 4.2]{Zha93}
\begin{align}\label{equ_adjunction}
	(\omega_a,P_a)=-(P_a,P_a)
\end{align}
for any point $P\in X(K)$. This can be checked by applying formula (\ref{equ_intersection}) to the intersection product $-\Delta_a\cdot\Delta_a\cdot p_1^* P_a$, where $p_1\colon X^2\to X$ denotes the projection to the first factor. Indeed, applying (\ref{equ_intersection}) with respect to the canonical section $1_\Delta\in H^0(X^2,\mathcal{O}_{X^2}(\Delta))$ yields the left hand side and applying (\ref{equ_intersection}) with respect to the canonical section $p_1^* 1_P\in H^0(X^2,p_1^*\mathcal{O}_{X}(P))$ yields the right hand side. In both cases, the integral in (\ref{equ_intersection}) vanishes, as worked out in \cite[Section 3.5]{Zha10}, see also Section \ref{sec_hodgeindex} for similar computations.

If $\mathcal{L}_a$ is an admissible line bundle on $X$ with $\deg \mathcal{L}=0$, then $\mathcal{L}$ defines a point in the Jacobian variety $\operatorname{Pic}^0(X)$ of $X$ and by \cite[(5.4)]{Zha93} its N\'eron--Tate height, defined as in the introduction, is equal to
\begin{align}\label{equ_heightnt}
	h_{\operatorname{NT}}(\mathcal{L})=-(\mathcal{L}_a,\mathcal{L}_a).
\end{align}

\section{Admissible metrics on the Jacobian variety}\label{sec_jacobian}
Continuing the notation from the last section, we discuss admissible metrics on the Jacobian variety $J=\operatorname{Pic}^0(X)$ of $X$. We will compare them to the admissible metrics on the curve. Here, we additionally assume that $X(K)$ is non-empty. We refer to \cite[Section 2]{Zha95} and \cite{Yam17} for more details and to \cite[Section 1]{Mil86} for the definition of the Jacobian variety.

We fix a point $P\in X(K)$ and we denote $\Theta$ for the divisor given by the image of the map
\[X^{g-1}\to J,\quad (P_1,\dots,P_{g-1})\mapsto [P_1+\dots+P_{g-1}-(g-1)P],\]
which is ample by \cite[Theorem 6.6]{Mil86}. Hence, we get an ample and symmetric line bundle $\mathcal{L}=\mathcal{O}_{J}(\Theta)\otimes[-1]^*\mathcal{O}_{J}(\Theta)$ on $J$, where in general $[n]\colon J\to J$ denotes the multiplication-by-$n$ morphism on $J$ for any $n\in\mathbb{Z}$. Thus, we can choose an isomorphism $\phi\colon \mathcal{L}^{\otimes 4}\cong [2]^*\mathcal{L}$. Let $(\widetilde{J},\widetilde{\mathcal{L}})$ be a model of $(J,\mathcal{L})$ over $B$. As $\mathcal{L}$ is ample, we may assume that $\widetilde{\mathcal{L}}$ is nef. There exists an open non-empty subset $U\subseteq B$, such that the maps $[2]$ and $\phi$ extend to maps $[2]\colon \widetilde{J}_U\to \widetilde{J}_U$ and $\phi_U\colon \widetilde{\mathcal{L}}_{U}^{\otimes 4}\to [2]^*\widetilde{\mathcal{L}}_{U}$ on the base changes $\widetilde{J}_U$ and $\widetilde{\mathcal{L}}_U$ of $\widetilde{J}$ and $\widetilde{\mathcal{L}}$ along the embedding $U\to B$. We set $(\widetilde{J}_0,\widetilde{\mathcal{L}}_0)=(\widetilde{J},\widetilde{\mathcal{L}})$ and inductively, we construct models $(\widetilde{J}_n,\widetilde{\mathcal{L}}_n)$ of $(J,\mathcal{L}^{\otimes 4^n})$, such that $[2]\colon J\to J$ extends to a map $f_n\colon \widetilde{J}_{n+1}\to \widetilde{J}_n$ and $\widetilde{\mathcal{L}}_{n+1}=f_n^*\widetilde{\mathcal{L}}_n$. Then the sequence of metrics on $\mathcal{L}$ associated to the models $(\widetilde{J}_n,\widetilde{\mathcal{L}}_n)$ converges to an adelic metric $\|\cdot\|$ on $\mathcal{L}$, as the restriction of $\|\cdot\|$ to $U$ is given by the model metric of $(\widetilde{J}_U,\widetilde{\mathcal{L}}_{U})$ and as Zhang proved in \cite[Theorem (2.2)]{Zha95}, $\|\cdot\|_v$ is continuous and bounded for all $v\in |B|$. The adelic line bundle  $\hat{\mathcal{L}}=(\mathcal{L},\|\cdot\|)$ is called an \emph{admissible adelic line bundle}. Since $\widetilde{\mathcal{L}}_n$ is nef for all $n\ge 0$, we deduce that $\hat{\mathcal{L}}$ is a nef adelic line bundle. We remark that by construction
\begin{align}\label{equ_NT}
	h_{\operatorname{NT}}(x)=h_{\hat{\mathcal{L}}}(x):=\frac{\widehat{\deg}(\hat{\mathcal{L}}|_{x})}{[K(x):K]}
\end{align}
for all $x\in J(\overline{K})$, where we also denote by $x$ the image of $x\colon \operatorname{Spec}(\overline{K})\to J$.

The following lemma gives a comparison of the numerical equivalence classes of the admissible metrized line bundles obtained on the curve and on the Jacobian variety. We recall from Section 3 that if $r$ is a real number, we also write $r$ for the numerical equivalence class of the trivial line bundle equipped with the canonical metric multiplied by $e^{-r}$ at one specified place and with the canonical metrics at all of the other places.
\begin{Lem}\label{lem_nefbundle}
	Consider the map
	\[f\colon X^2\to J,\quad (P_1,P_2)\mapsto [(g-1)(P_1+P_2)-\omega]\]
	and write $p_i\colon X^2\to X$ for the projection to the $i$-th factor.
	The nef adelic line bundle $f^*\hat{\mathcal{L}}$ on $X^2$ lies in the numerical equivalence class of
	\[(g-1)(g+1)(p_1^*\omega_a+p_2^*\omega_a)-2(g-1)^2\Delta_a-\omega_a^2.\]
\end{Lem}
\begin{proof}
	A theorem by Carney \cite[Theorem 3.2]{Car20} implies that an integrable line bundle $\overline{\mathcal{M}}$ on a smooth projective variety $Y$ over $K$ is numerically trivial if and only if $\widehat{\deg}(\overline{\mathcal{M}}|_{y})=0$ for all $y\in Y(\overline{K})$.
	Thus, it is enough to show that
	\[\widehat{\deg}\left(f^*\hat{\mathcal{L}}|_{x}\right)=\widehat{\deg}\left(((g-1)(g+1)(p_1^*\omega_a+p_2^*\omega_a)-2(g-1)^2\Delta_a-\omega_a^2)|_{x}\right)\]
	for all $x\in X^2(\overline{K})$. We fix an $x\in X^2(\overline{K})$. By the projection formula (\ref{equ_projeciton-field-extension}) it is enough to prove the above equality after replacing $K$ by a finite field extension $K'$ and $X$ by its base change $X_{K'}$. Thus, we may assume $x\in X^2(K)$.
	If we write $x=(P_1,P_2)$ with $P_1,P_2\in X(K)$, we get by Equations (\ref{equ_projection1}) and (\ref{equ_NT}) that
	\[\widehat{\deg}\left(f^*\hat{\mathcal{L}}|_{x}\right)=\widehat{\deg}\left(\hat{\mathcal{L}}|_{f(P_1,P_2)}\right)=h_{\operatorname{NT}}((g-1)(P_1+P_2)-\omega).\]
	Further, we may compute using Equations (\ref{equ_adjunction}) and (\ref{equ_heightnt}) that
	\begin{align*}
		&h_{\operatorname{NT}}((g-1)(P_1+P_2)-\omega)\\
		&=-((g-1)(P_{1,a}+P_{2,a})-\omega_a,(g-1)(P_{1,a}+P_{2,a})-\omega_a)\\
		&=(g-1)(g+1)(\omega_a,P_{1,a}+P_{2,a})-2(g-1)^2(P_{1,a},P_{2,a})-\omega_a^2\\
		&=\widehat{\deg}\left(((g-1)(g+1)(p_1^*\omega_a+p_2^*\omega_a)-2(g-1)^2\Delta_a-\omega_a^2)|_{(P_1,P_2)}\right).
	\end{align*}
	This completes the proof of the lemma.
\end{proof}
\section{An application of the arithmetic Hodge index theorem}\label{sec_hodgeindex}
The goal of this section is to deduce a lower bound for $\omega_a^2$ as an application of the arithmetic Hodge index theorem over function fields by Carney \cite[Theorem~3.1]{Car20} for $X^2$. This is motivated by the analogous result in the number field case found by the third author in \cite[Theorem 1.2]{Wil19}. We retain the notation from the previous sections.
We first recall the inequality part of the arithmetic Hodge index theorem over function fields for the variety $X^2$. If $\overline{\mathcal{N}}$ and $\overline{\mathcal{M}}$ are integrable line bundles on $X^2$ such that
\begin{enumerate}[(i)]
	\item\label{condition_i} $\overline{\mathcal{N}}$ is nef,
	\item\label{condition_ii} $\mathcal{N}$ is big and
	\item\label{condition_iii} their usual intersection number satisfies $\mathcal{M}\cdot\mathcal{N}=0$,
\end{enumerate}
then $\overline{\mathcal{M}}\cdot\overline{\mathcal{M}}\cdot \overline{\mathcal{N}}\le 0$. As the statement only depends on the numerical equivalence class of $\overline{\mathcal{N}}$, it is enough to check \ref{condition_i} for an integrable line bundle numerically equivalent to $\overline{\mathcal{N}}$. It is even enough to check it for an integrable line bundle numerically equivalent to any positive multiple of $\overline{\mathcal{N}}$.
\begin{Pro}\label{pro_lowerbound}
	Any smooth projective geometrically connected curve $X$ of genus $g\ge 2$ defined over $K$ satisfies
	\[\omega_a^2\ge \frac{\max(2,g-1)}{2g+1}\sum_{v\in |B|}\varphi(\Gamma_v(X)).\]
	If $\operatorname{char}~k=0$ or $X$ is hyperelliptic, we may replace $\max(2,g-1)$ by $2(g-1)$.
\end{Pro}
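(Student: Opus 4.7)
The plan is to apply the arithmetic Hodge index theorem on the surface $X^2$ with a carefully chosen pair $(\overline{\mathcal{M}},\overline{\mathcal{N}})$. After a finite base change (which scales both $\omega_a^2$ and $\sum_v\varphi(\Gamma_v(X))$ by the same factor $[K':K]$), we may assume $X$ has semistable reduction and $X(K)\neq\emptyset$, so the setup of Section \ref{sec_admissible} applies. Take
$$\overline{\mathcal{N}}=f^*\hat{\mathcal{L}}\cong(g-1)(g+1)(p_1^*\omega_a+p_2^*\omega_a)-2(g-1)^2\Delta_a-\omega_a^2,$$
which is nef by Lemma \ref{lem_nefbundle}. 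Using $p_1^*\omega\cdot p_2^*\omega=(2g-2)^2$, $(p_i^*\omega)^2=0$, $p_i^*\omega\cdot\Delta=2g-2$ and $\Delta^2=2-2g$, one computes $\mathcal{N}^2=8g(g-1)^5>0$, so $\mathcal{N}$ is big. For $\overline{\mathcal{M}}$ take the geometrically $\mathcal{N}$-orthogonal combination
$$\overline{\mathcal{M}}=p_1^*\omega_a+p_2^*\omega_a-(g-1)\Delta_a,$$
so that $\mathcal{M}\cdot\mathcal{N}=0$ (a direct check). This is the obvious admissible analogue of the divisor used in the number field case \cite[Theorem 1.2]{Wil19}.

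Next, I would expand $\overline{\mathcal{M}}^2\cdot\overline{\mathcal{N}}$ into individual three-fold intersection numbers. Each of these can be computed by combining three inputs: (i) the projection formula (\ref{equ_projection2}) for $p_i:X^2\to X$, which gives $\omega_i^3=0$, $\omega_i^2\omega_j=(2g-2)\omega_a^2$ and $\omega_i^2\cdot\Delta=\omega_a^2$; (ii) formula (\ref{equ_intersection}) with the section $1_\Delta$ of $\mathcal{O}_{X^2}(\Delta)$ together with the admissible isometry $s^*\Delta_a\cong-\omega_a$, which yields $\omega_i\omega_j\cdot\Delta=\omega_a^2$ and $\omega_i\cdot\Delta^2=-\omega_a^2$, the integral terms vanishing because $\int_{\Gamma}g_v(x,y)\,d\mu_{K,v}(y)=0$ for every $x$; and (iii) the fact that the trivial constant bundle $-\omega_a^2$ contributes $-\omega_a^2\cdot\mathcal{M}^2=2(g-1)^3\omega_a^2$, since $\mathcal{M}^2=-2(g-1)^3$. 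Assembling these gives after a clean algebraic simplification
$$\overline{\mathcal{M}}^2\cdot\overline{\mathcal{N}}=-2(g-1)^3\bigl[(g+2)\omega_a^2+(g-1)\Delta_a^3\bigr].$$
The arithmetic Hodge index theorem applied to $(\overline{\mathcal{M}},\overline{\mathcal{N}})$ forces this expression to be $\leq 0$, hence $\omega_a^2\geq -\tfrac{g-1}{g+2}\Delta_a^3=\tfrac{g-1}{g+2}(\omega_a^2-\Delta_a^3)-\tfrac{g-1}{g+2}\omega_a^2$, which rearranges to $\omega_a^2\geq\tfrac{g-1}{2g+1}(\omega_a^2-\Delta_a^3)$.

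To finish, one needs a comparison between $\omega_a^2-\Delta_a^3$ and $\sum_v\varphi(\Gamma_v(X))$. Applying (\ref{equ_intersection}) once more with $l_d=1_\Delta$ gives
$$\omega_a^2-\Delta_a^3=\sum_{v\in|B|}\int_{X_v^2}\log\|1_\Delta\|_v^{-1}\,c_1(\Delta_{a,v})^2,$$
and a local analysis of the admissible curvature $c_1(\Delta_{a,v})=\delta_\Delta+\eta_v$ identifies each summand with a nonnegative multiple of $\varphi(\Gamma_v(X))$. Specifically, one shows the general inequality $\omega_a^2-\Delta_a^3\geq\sum_v\varphi(\Gamma_v(X))$, and in characteristic zero (where Arakelov-type integration on the Berkovich analytic space is available with no inseparability loss) or in the hyperelliptic case (where the hyperelliptic involution can be used to double the bound), the sharper $\omega_a^2-\Delta_a^3\geq 2\sum_v\varphi(\Gamma_v(X))$. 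Substituting yields the claimed $\tfrac{g-1}{2g+1}$ in general and $\tfrac{2(g-1)}{2g+1}$ in the special cases. Since every genus-$2$ curve is hyperelliptic, the genus-$2$ bound is automatically $\tfrac{2}{5}=\tfrac{\max(2,g-1)}{2g+1}$, which produces the $\max(2,g-1)$ factor in the statement.

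The main obstacle, by far, is the local identification in the last step: proving that each $v$-adic integral $\int\log\|1_\Delta\|_v^{-1}\,c_1(\Delta_{a,v})^2$ equals (or dominates, in positive characteristic) a specific multiple of $\varphi(\Gamma_v(X))$. This is essentially Zhang's formula for the triple self-intersection of the admissible diagonal from \cite{Zha10}, and verifying it uniformly over function fields of arbitrary characteristic, including the subtle factor-$2$ improvement in characteristic zero and the hyperelliptic case, is the technical heart of the argument. All the other steps are either formal manipulations or routine intersection-theoretic computations already enabled by Sections \ref{sec_adelic}--\ref{sec_jacobian}.
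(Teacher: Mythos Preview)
Your Hodge-index computation on $X^2$ is essentially the paper's own: your $\overline{\mathcal{N}}$ is $(g-1)$ times the paper's and your $\overline{\mathcal{M}}$ is $-1$ times the paper's, so the expansions agree, and your formula
\[
\overline{\mathcal{M}}^2\cdot\overline{\mathcal{N}}=-2(g-1)^3\bigl[(g+2)\omega_a^2+(g-1)\Delta_a^3\bigr]
\]
is correct. Combined with Zhang's identity $\Delta_a^3=\omega_a^2-\sum_v\varphi(\Gamma_v(X))$ \cite[Lemma~3.5.4]{Zha10} this yields $\omega_a^2\ge\frac{g-1}{2g+1}\sum_v\varphi(\Gamma_v(X))$, exactly as in the paper.

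The genuine gap is in your last step. The relation $\omega_a^2-\Delta_a^3=\sum_v\varphi(\Gamma_v(X))$ is an \emph{equality}, valid in every characteristic: each local term $\int\log\|1_\Delta\|_v^{-1}c_1(\Delta_{a,v})^2$ is computed purely on the reduction graph $\Gamma_v(X)$ and equals $\varphi(\Gamma_v(X))$ on the nose. There is no ``inseparability loss'' and no room for a factor-of-two improvement here; your proposed inequality $\omega_a^2-\Delta_a^3\ge 2\sum_v\varphi(\Gamma_v(X))$ is simply false whenever $\sum_v\varphi(\Gamma_v(X))>0$. Consequently the Hodge-index argument on $X^2$ alone cannot produce the bound $\tfrac{2(g-1)}{2g+1}$.

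In the paper, the stronger constant in characteristic $0$ and in the hyperelliptic case is \emph{not} obtained from the $X^2$ computation at all: it is imported from \cite[Section~1.4 and Corollary~1.3.3]{Zha10}, where it comes from the nonnegativity of the height of the Gross--Schoen modified diagonal cycle on $X^3$ (a consequence of Hodge theory in characteristic $0$, and an identity for hyperelliptic curves since the cycle is then torsion). That nonnegativity gives $\tfrac{2g+1}{2g-2}\omega_a^2\ge\sum_v\varphi(\Gamma_v(X))$ directly. Your proposal needs to invoke this separate input rather than trying to squeeze a second factor out of $\Delta_a^3$.
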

\begin{proof}
	If $\operatorname{char}~k=0$, this follows from \cite[Section 1.4]{Zha10} and if $X$ is hyperelliptic, equality in fact holds, by \cite[Corollary 1.3.3]{Zha10}. As all curves of genus $g=2$ are hyperelliptic, it remains to prove $\omega_a^2\ge \tfrac{g-1}{2g+1}\sum_{v\in B}\varphi(\Gamma_v(X))$ for $g\ge 3$. To simplify the notation, we write $\omega_{12}=p_1^*\omega+ p_2^*\omega$, where $p_i\colon X^2\to X$ denotes the projection to the $i$-th factor. Further, we set $\hat{\omega}_{12}=p_1^*\omega_a+p_2^*\omega_a$. We would like to apply the arithmetic Hodge index theorem to the integrable line bundles
	\[\overline{\mathcal{N}}=(g+1)\hat{\omega}_{12}-2(g-1)\Delta_a-\frac{\omega_a^2}{g-1},\quad \overline{\mathcal{M}}=(g-1)\Delta_a-\hat{\omega}_{12}\]
	on $X^2$. Strictly speaking, the constant $\omega_a^2$ only defines an integrable line bundle up to numerical equivalence in our notation, and hence $\overline{\mathcal{N}}$ is similarly only well-defined up to numerical equivalence. But this is sufficient for the application to the arithmetic Hodge index theorem.
	
	By Lemma \ref{lem_nefbundle} the numerical equivalence class $(g-1)\overline{\mathcal{N}}$ can be represented by a nef adelic line bundle. Thus, condition \ref{condition_i} is satisfied.
	This also implies, that the underlying line bundle $\mathcal{N}$ is nef on $X^2$, such that we have $\operatorname{vol}(\mathcal{N})=\mathcal{N}^2$. We compute the volume explicitly:
	\begin{align*}
		\operatorname{vol}(\mathcal{N})&=\left((g+1)(\omega_{12})-2(g-1)\mathcal{O}_{X^2}(\Delta)\right)^2\\
		&=2(g+1)^2p_1^*\omega\cdot p_2^*\omega-4(g^2-1)\omega_{12}\cdot\mathcal{O}_{X^2}(\Delta)+4(g-1)^2\mathcal{O}_{X^2}(\Delta)^2\\
		&=\left(4(g+1)^2(g-1)-8(g^2-1)-4(g-1)^2\right)\deg \omega=8g(g-1)^3>0.
	\end{align*}
	Hence, condition \ref{condition_ii} is also satisfied. To check condition \ref{condition_iii}, we calculate that
	\begin{align*}
		\mathcal{M}\cdot\mathcal{N}&=\left((g+1)(\omega_{12})-2(g-1)\mathcal{O}_{X^2}(\Delta)\right)\cdot \left((g-1)\mathcal{O}_{X^2}(\Delta)-\omega_{12}\right)\\
		&=-2(g+1)p_1^*\omega\cdot p_2^*\omega+(g+3)(g-1)\omega_{12}\mathcal{O}_{X^2}(\Delta)-2(g-1)^2\mathcal{O}_{X^2}(\Delta)^2\\
		&=(-4(g+1)(g-1)+2(g+3)(g-1)+2(g-1)^2)\deg\omega=0.
	\end{align*}
	Thus, the arithmetic Hodge index theorem implies that $\overline{\mathcal{M}}\cdot\overline{\mathcal{M}}\cdot\overline{\mathcal{N}}\le 0$. We now compute the left hand side to be
	\begin{align}\label{equ_MMN}
		\overline{\mathcal{M}}\cdot\overline{\mathcal{M}}\cdot\overline{\mathcal{N}}=&-2(g-1)^3\Delta_a^3+(g+5)(g-1)^2\hat{\omega}_{12}\cdot\Delta_a^2-2(g+2)(g-1)\hat{\omega}_{12}^2\cdot \Delta_a\\
		&+(g+1)\hat{\omega}_{12}^3-\frac{\mathcal{M}\cdot\mathcal{M}}{g-1}\omega_a^2.\nonumber
	\end{align}
	By a similar computation as above we have
	\[\mathcal{M}\cdot\mathcal{M}=2p_1^*\omega\cdot p_2^*\omega-2(g-1)\omega_{12}\cdot\mathcal{O}_{X^2}(\Delta)+(g-1)^2\mathcal{O}_{X^2}(\Delta)^2=-2(g-1)^3.\]
	By symmetry and by formula (\ref{equ_projection2}) we obtain
	\[\hat{\omega}_{12}^3=6(p_1^*\omega_a\cdot p_1^*\omega_a\cdot p_2^*\omega_a)=6\cdot \omega_a^2\cdot \deg \omega=12(g-1)\omega_a^2.\]
	In the same way, we obtain $p_1^*\omega_a\cdot p_1^*\omega_a\cdot\Delta_a=\omega_a^2$. Let $s\colon X\to X^2,~x\mapsto (x,x)$ the diagonal embedding. Then we can compute by the recursion formula for the intersection number (\ref{equ_intersection}) and by the definition of the metric on $\Delta_a$ in (\ref{equ_diagonalmetric}) that
	\[p_1^*\omega_a \cdot p_2^*\omega_a\cdot \Delta_a=s^*p_1^*\omega_a\cdot s^*p_2^*\omega_a+\sum_{v\in|B|}\int_{X^2(\overline{K}_v)}g_v c_1(p_1^*\omega_a)c_1(p_2^*\omega_a)=\omega_a^2,\]
	where the integral vanishes by the definition of the Green function $g_v$. See also \cite[Section 3.5]{Zha10} where it is also explained in more detail why we can replace $-\log\|1_\Delta\|_v$ by $g_v$. Hence, we can conclude again using symmetry	
	\[\hat{\omega}_{12}^2\cdot \Delta_a=2(p_1^*\omega_a\cdot p_1^*\omega_a\cdot \Delta_a+p_1^*\omega_a\cdot p_2^*\omega_a\cdot \Delta_a)=4\omega_a^2.\]
	In a similar way, we obtain from (\ref{equ_intersection}) and (\ref{equ_diagonalmetric}) that
	\[p_1^*\omega_a \cdot \Delta_a\cdot \Delta_a=s^*p_1^*\omega_a\cdot s^*\Delta_a+\sum_{v\in|B|}\int_{X^2(\overline{K}_v)}g_v c_1(p_1^*\omega_a)c_1(\Delta_a)=-\omega_a^2,\]
	where the integral vanishes as in \cite[Lemma 3.5.2]{Zha10}. We conclude by symmetry that
	\[\hat{\omega}_{12}\cdot\Delta_a^2=2(p_1^*\omega_a\cdot\Delta_a\cdot\Delta_a)=-2\omega_a^2.\]
	Finally, we compute the self-intersection number $\Delta_a^3$ to be
	\[\Delta_a^3=(s^*\Delta_a)^2+\sum_{v\in|B|}\int_{X^2(\overline{K}_v)} g_v c_1(\Delta_a)^2=\omega_a^2-\sum_{v\in|B|}\varphi(\Gamma_v(X)),\]
	where the last equality follows from \cite[Lemma 3.5.4]{Zha10}. Substituting everything into Equation (\ref{equ_MMN}) yields
	\[0\ge \overline{\mathcal{M}}\cdot\overline{\mathcal{M}}\cdot\overline{\mathcal{N}}=2(g-1)^3\sum_{v\in|B|}\varphi(\Gamma_v(X))-2(g-1)^2(2g+1)\omega_a^2,\]
	which is equivalent to the inequality we wanted.
\end{proof}
\section{Proof of Theorems \ref{mainthm} and \ref{thm_bogomolov}}\label{sec_proof}
In this section we prove Theorems \ref{mainthm} and \ref{thm_bogomolov}. We use the same notation as in the theorems.
By the semistable reduction theorem there exists a finite field extension $K'$ of $K$ such that $X_{K'}=X\otimes_K K'$ has semistable reduction over the normalization $B'$ of $B$ in $K'$. As $X_{K'}(\overline{K})= X(\overline{K})$, we may assume that $X$ has semistable reduction over $B$. Note that the base change to $X_{K'}$  does not change the statement of Theorem \ref{thm_bogomolov} as it modifies the values of $h_{\operatorname{NT}}$ and $\omega_a^2$ by the same factor $[K':K]$. (Note also that the computation of $\omega_a^2$ as described in Section \ref{sec_admissible} makes sense regardless of whether $X$ has semistable reduction over $K$.) We write $\pi\colon \mathcal{X}\to B$ for the minimal regular model of $X$ over $B$.

Fix an $\epsilon\ge 0$ and let $F=\{P_1,\dots, P_s\}\subseteq X(\overline{K})$ be any set of $s\ge 2$ geometric points, such that $h_{\operatorname{NT}}(j_D(P_i))\le \epsilon \omega_a^2$ for any $1\le i\le s$. If no such set exists, we have nothing to prove. Replacing $K$ by a finite field extension again, we can assume that $F\subseteq X(K)$. By the bilinearity of the pairing and by Equation (\ref{equ_heightnt}) one checks directly with the parallelogram law that
\begin{align*}
	&h_{\operatorname{NT}}(P_j+P_k-2D)+h_{\operatorname{NT}}(P_j-P_k)\\
	&=-(P_{j,a}+P_{k,a}-2D_a,P_{j,a}+P_{k,a}-2D_a)-(P_{j,a}-P_{k,a},P_{j,a}-P_{k,a})\\
	&=-2(P_{j,a}-D_a,P_{j,a}-D_a)-2(P_{k,a}-D_a,P_{k,a}-D_a)\\
	&=2h_{\operatorname{NT}}(j_D(P_j))+2h_{\operatorname{NT}}(j_D(P_k)),
\end{align*}
for all $j,k\le s$. Using the height bounds $h_{\operatorname{NT}}(j_D(P_i))\le \epsilon \omega_a^2$ for all $i\le s$ and also $h_{\operatorname{NT}}(P_j+P_k-2D)\ge 0$, we deduce that
\[2(P_{j,a},P_{k,a})-(P_{j,a},P_{j,a})-(P_{k,a},P_{k,a})\le 4\epsilon \omega_a^2.\]
Summing over all pairs of different points in $F$, we obtain
\begin{align}\label{equ_sum}
	-\sum_{j=1}^s(P_{j,a},P_{j,a})\le -\frac{1}{s-1}\sum_{j\neq k}^s(P_{j,a},P_{k,a})+2s\epsilon \omega_a^2.
\end{align}
We also write $F$ for the divisor $F=P_1+\dots+P_s$. Again by Equation (\ref{equ_heightnt}) we have
\[0\le h_{\operatorname{NT}}(s\omega-(2g-2)F)=-(s\omega_a-(2g-2)F_a,s\omega_a-(2g-2)F_a).\]
By bilinearity and symmetry of the pairing we may rewrite this inequality as
\[\omega_a^2\le \tfrac{4(g-1)}{s}\sum_{j=1}^s(\omega_a,P_{j,a})-\tfrac{4(g-1)^2}{s^2}\left(\sum_{j=1}^s(P_{j,a},P_{j,a})+\sum_{j\neq k}^s(P_{j,a},P_{k,a})\right).\]
As $(\omega_a,P_{j,a})=-(P_{j,a},P_{j,a})$ by the adjunction formula (\ref{equ_adjunction}), we obtain by an application of (\ref{equ_sum}) that
\begin{align}\label{equ_bound_s}
	\omega_a^2&\le\left(-\tfrac{4(g-1)}{s(s-1)}-\tfrac{4(g-1)^2}{s^2}-\tfrac{4(g-1)^2}{s^2(s-1)}\right)\sum_{j\neq k}^s (P_{j,a},P_{k,a})+\left(\tfrac{4(g-1)^2}{s^2}+\tfrac{4(g-1)}{s}\right)2s\epsilon\omega_a^2\\
	&=-\tfrac{4(g-1)g}{s(s-1)}\sum_{j\neq k}^s (P_{j,a},P_{k,a})+8(g-1)\tfrac{g-1+s}{s}\epsilon\omega_a^2\nonumber\\
	&\le -\tfrac{4(g-1)g}{s(s-1)}\sum_{j\neq k}^s (P_{j,a},P_{k,a})+4(g^2-1)\epsilon\omega_a^2\nonumber,
\end{align}
where the last inequality follows from the fact that $\frac{g-1+s}{s}$ is maximized at $s=2$ for $s\ge 2$.
Let us first assume $\epsilon\in \left[0,\frac{1}{4(g^2-1)}\right)$. Then we can solve for $\omega_a^2$ and get
\[\omega_a^2\le -\tfrac{4(g-1)g}{s(s-1)(1-4(g^2-1)\epsilon)}\sum_{j\neq k}^s (P_{j,a},P_{k,a}).\]
By Equation (\ref{equ_admissiblepairing}), we have 
\[(P_{j,a},P_{k,a})=\sum_{v\in |B|}(i_v(P_j,P_k)+g_v(R_v(P_j),R_v(P_k)))\ge \sum_{v\in |B|}g_v(R_v(P_j),R_v(P_k))\]
for $j\neq k$.
Thus, we conclude using Propositions \ref{pro_elkies} and \ref{pro_lowerbound} that
\begin{align}\label{equ_bound-two-sided}
	\tfrac{\max(2,g-1)}{2g+1}\sum_{v\in |B|}\varphi(\Gamma_v(X))\le\omega_a^2\le \tfrac{4(g-1)gc'(g)}{(s-1)(1-4(g^2-1)\epsilon)}\sum_{v\in |B|}\varphi(\Gamma_v(X)),
\end{align}
with $c'(g)$ as in Proposition \ref{pro_elkies}.

We first consider the case where $X$ has everywhere potentially good reduction.
As $X$ and $J$ have semistable reduction, they have good reduction at a place if they have potentially good reduction at this place.
Thus, we get $\omega_a^2=\omega_{\mathcal{X}/B}^2$ by Equation (\ref{equ_omegaepsilon}). The Noether formula states that $\omega_{\mathcal{X}/B}^2=12\deg \pi_* \omega_{\mathcal{X}/B}$ in this case, and it has been shown by Par\v{s}in \cite[Proposition 5]{Par68} when $\operatorname{char}~k=0$ and by Szpiro \cite[Theorem 1]{Szp81} when $\operatorname{char}~k>0$ that $\deg \pi_* \omega_{\mathcal{X}/B}>0$, as $X$ is non-isotrivial. On the other hand we have $\sum_{v\in |B|}\varphi(\Gamma_v(X))=0$ in this case, contradicting the above inequality. Hence there is no set $F$ as above if $X$ has everywhere potentially good reduction. Consequently, we obtain
\[\#\{P\in X(\overline{K})~|~h_{\operatorname{NT}}(P)\le \epsilon \omega_a^2\}\le 1\]
if $X$ has everywhere potentially good reduction.

Now we consider the case where $X$ does not have everywhere potentially good reduction and $\epsilon\in \left[0,\max\left(\frac{1}{8g},\frac{1}{9(g-1)}\right)\right)$. Note that Theorem \ref{thm_bogomolov} is trivially satisfied if there is no set $F=\{P_1,\dots,P_s\}\subset X(\overline{K})$ of cardinality $s\ge \max\{9(g-1),(g-1)^2\}$. Thus, we may assume $s\ge \max\{9(g-1),(g-1)^2\}$. This allows us to make a stronger estimate in the last step of (\ref{equ_bound_s}) and we get
\[\omega_a^2\le -\tfrac{4(g-1)g}{s(s-1)}\sum_{j\neq k}^s (P_{j,a},P_{k,a})+\min\{9(g-1),8g\}\epsilon\omega_a^2.\]
This leads to the following improved version of (\ref{equ_bound-two-sided}):
\[\tfrac{\max(2,g-1)}{2g+1}\sum_{v\in |B|}\varphi(\Gamma_v(X))\le\omega_a^2\le \tfrac{4(g-1)gc'(g)}{(s-1)(1-\min\{9(g-1),8g\}\epsilon)}\sum_{v\in |B|}\varphi(\Gamma_v(X)).\]
Since $\sum_{v\in |B|}\varphi(\Gamma_v(X))>0$ by (\ref{inequ_cinkir}), we can solve for $s$ and thereby conclude that
\begin{align}\label{equ_bounds}
	s\le \left\lfloor\frac{4(g-1)g (2g+1)c'(g)}{\max(2,g-1)(1-\min\{9(g-1),8g\}\epsilon)}\right\rfloor+1=:c(g,\epsilon).
\end{align}
We now compute this number $c(g,\epsilon)$ explicitly. For $g\le 3$, one obtains
\[c(2,\epsilon)=\left\lfloor \frac{75}{1-9\epsilon}\right\rfloor+1,\qquad c(3,\epsilon)=\left\lfloor \frac{3920}{17(1-18\epsilon)}\right\rfloor +1.\]
In general, $c(g,\epsilon)=\left\lfloor\frac{16g^4+36g^2-26g-2}{(g-1)^2(1-\min\{9(g-1),8g\}\epsilon)}\right\rfloor+1$ for $g\ge 4$.

If $J$ has everywhere potentially good reduction, then every reduction graph $\Gamma_v(X)$ is a tree. Hence by Proposition \ref{pro_elkies}, we may replace $c'(g)$ in (\ref{equ_bounds}) by $c'^{tr}(g)$, such that we obtain $s\le c^{tr}(g,\epsilon)$ with 
\[c^{tr}(2,\epsilon)=\left\lfloor\frac{10}{1-9\epsilon}\right\rfloor+1\quad \text{and} \quad c^{tr}(g,\epsilon)=\left\lfloor\frac{4g^3-4g^2+g+2}{(g-1)(1-\min\{9(g-1),8g\}\epsilon)}\right\rfloor+1\]
for $g\ge 3$. If $\operatorname{char}~k=0$ or $X$ is hyperelliptic, we may replace $\max(2,g-1)$ in (\ref{equ_bounds}) by $2g-2$ by Proposition \ref{pro_lowerbound}. Thus, for $g\ge 3$ we get $s\le \left\lfloor \frac{c(g,\epsilon)+1}{2}\right\rfloor$, respectively $s\le \left\lfloor \frac{c^{tr}(g,\epsilon)+1}{2}\right\rfloor$ if $J$ has everywhere potentially good reduction. 
This completes the proof of Theorem \ref{thm_bogomolov}.
As geometric torsion points $x\in J(\overline{K})_{\operatorname{tors}}$ have N\'eron--Tate height $h_{\operatorname{NT}}(x)=0$, we recover Theorem \ref{mainthm} by setting $\epsilon=0$ and $c(g)=c(g,0)$ and $c^{tr}(g)=c^{tr}(g,0)$.

\section{Proof of Corollary 1.2}\label{sec_cor}
In this section we prove Corollary \ref{cor}. Let $K$ be an algebraically closed field with prime field $\mathbb{F}$ and $X/K$ a smooth projective connected curve of genus $g\ge 2$, which cannot be defined over an algebraic extension of $\mathbb{F}$. We can choose a subfield $K_0\subseteq K$ which is finitely generated over $\mathbb{F}$ such that $X=X_0\otimes_{K_0}K$ for $X_0/K_0$, and we choose $K_0$ such that $\operatorname{trdeg}(K_0/\mathbb{F})$ is minimal among these fields. By assumption $\operatorname{trdeg}(K_0/\mathbb{F})\ge 1$. We choose a subfield $k_0\subseteq K_0$ with $\operatorname{trdeg}(K_0/k_0)=1$ and denote $k$ for the algebraic closure of $k_0$ in $K$. We define $k(B)$ to be the compositum of $k$ and $K_0$ in $K$. Since $k(B)$ is a finitely generated extension of $k$ with $\operatorname{trdeg}(k(B)/k)=1$, there exists a smooth projective connected curve $B$ defined over $k$ such that $k(B)$ is the function field of $B$, see for example \cite[Theorem I.6.9]{Har77}. We write $\overline{k(B)}$ for the algebraic closure of $k(B)$ in $K$.

Now let $D\in\operatorname{Div}^1(X)$ be a degree $1$ divisor on $X$ and $P_1,\dots,P_s\in X(K)$ any pairwise different points such that
\[j_D(P_1),\dots,j_D(P_s)\in j_D(X(K))\cap J(K)_{\operatorname{tors}}.\]
We have to show that $s\le c(g)$, where $c(g)$ is as in Theorem \ref{mainthm}. We choose a subfield $K_1\subseteq K$ which is finitely generated over $\overline{k(B)}$ such that $P_1,\dots, P_s$ and $D$ are defined over $K_1$. We write $X_1=X_0\otimes_{K_0}K_1$ and $J_1=\operatorname{Pic}^0(X_1)$ for its Jacobian. We denote by $P_1,\dots, P_s$ and $D$ also the points and the divisor on $X_1$ such that their base change to $X$ induce $P_1,\dots, P_s$ and $D$.
Let $V$ be a quasi-projective regular irreducible variety defined over $\overline{k(B)}$ with function field $K_1$. Let $\mathcal{X}_1$ be a model of $X_1$ over $V$. This can be obtained by choosing an embedding $X_1\subseteq \mathbb{P}^N_{K_1}$ and taking the Zariski closure of $X_1$ in $\mathbb{P}^N_{\overline{k(B)}}\times V$.
After replacing $V$ by a Zariski open dense subset, we may assume that
\begin{enumerate}[(i)]
	\item For every point $v\in V$ the fiber $\mathcal{X}_{1,v}$ is a smooth projective geometrically connected curve.
	\item No fiber $\mathcal{X}_{1,v}$ is definable over $k$.
	\item The specializations $P_{1,v},\dots P_{s,v}\in \mathcal{X}_{1,v}(\overline{k(B)})$ of the points $P_1,\dots,P_s$ are pairwise different for all points $v\in V$.
\end{enumerate}
Let $\mathcal{J}_1\to V$ be the Jacobian of the family $\mathcal{X}_1\to V$. The specialization $D_v$ of the divisor $D$ on $X_1$ defines a degree $1$ divisor on $\mathcal{X}_{1,v}$ for every $v\in V$ and hence an Abel--Jacobi embedding $j_{D_v}\colon \mathcal{X}_{1,v}\to \mathcal{J}_{1,v},~P\to P-D_v$. By construction taking the specialization commutes with the Abel--Jacobi maps, that is $j_{D_v}(P_v)=j_D(P)_v$, where $P_v$ is the specialization of a point $P\in X_1(K_1)$ at $v\in V$ and $j_D(P)_v$ is the specialization of the point $j_D(P)\in J_1(K_1)$. Since specialization respects the group structures of $J_1$ and $\mathcal{J}_{1,v}$, it follows that $j_{D_v}(P_{1,v}),\dots,j_{D_v}(P_{s,v})\in \mathcal{J}_{1,v}(\overline{k(B)})_{\operatorname{tors}}$.

We fix a $v\in V(\overline{k(B)})$. Then $\mathcal{X}_{1,v}$ is defined over a finite extension $K'$ of $k(B)$ in $\overline{k(B)}$. If $B'$ denotes the normalization of $B$ in $K'$, we get that $K'=k(B')$ is the function field of a smooth projective connected curve $B'$ over $k$. We write $X'$ for the curve such that $\mathcal{X}_{1,v}=X'\otimes_{k(B')}\overline{k(B)}$ and $J'=\operatorname{Pic}^0(X')$ for its Jacobian. Note that by (ii) the curve $X'$ is non-isotrivial. By Theorem \ref{mainthm} we have
\[\#j_D(\mathcal{X}_{1,v}(\overline{k(B)}))\cap \mathcal{J}_v(\overline{k(B)})_{\operatorname{tors}}=\#j_D(X'(\overline{k(B)}))\cap J'(\overline{k(B)})_{\operatorname{tors}}\le c(g).\]
Since $j_{D_v}(P_{1,v}),\dots,j_{D_v}(P_{s,v})\in j_D(\mathcal{X}_{1,v}(\overline{k(B)}))\cap \mathcal{J}_v(\overline{k(B)})_{\operatorname{tors}}$ are pairwise distinct points, we conclude that $s\le c(g)$. This completes the proof of Corollary \ref{cor}.

\section*{Acknowledgment}
	The authors would like to thank Matt Baker, Alex Carney,
	Robin de Jong, Xander Faber, Myrto Mavraki, and Lucia Mocz for their helpful conversations and useful insights on the admissible pairing. We also thank Jiawei Yu for pointing out an improvement in the bound for $\epsilon$ in Theorem \ref{thm_bogomolov}, as well as the editor for the suggestion of adding Corollary \ref{cor}. We thank the anonymous referees for their very careful reading and useful comments on two drafts of this paper.


\begin{thebibliography}{88}
	\bibitem{BF06} \textit{M.~Baker} and \textit{X.~Faber}, Metrized graphs, Laplacian operators, and electrical networks, in: Quantum graphs and their applications, Contemp. Math. \textbf{415} (2006), 15--33. 
	\bibitem{BP01} \textit{M.~Baker} and \textit{B.~Poonen}, Torsion packets on curves, Compositio Math. \textbf{127} (2001), no. 1, 109--116.
	\bibitem{BR07} \textit{M.~Baker} and \textit{R.~Rumely}, Harmonic analysis on metrized graphs, Canad. J. Math. \textbf{59} (2007), no. 2, 225--275.
	\bibitem{Bui96} \textit{A.~Buium}, Geometry of p-jets, Duke Math. J. \textbf{82} (1996), 349--367.
	\bibitem{Car20} \textit{A.~Carney}, The arithmetic Hodge index theorem and rigidity of dynamical systems over function fields, Pacific J. Math. \textbf{309} (2020), 71--102.
	\bibitem{CR93} \textit{T.~Chinburg} and \textit{R.~Rumely}, The capacity pairing, J. Reine Angew. Math. \textbf{434} (1993), 1--44.
	\bibitem{Cin15} \textit{Z.~Cinkir}, Admissible invariants of genus 3 curves, Manuscripta Math. \textbf{148} (2015), no. 3-4, 317--339.
	\bibitem{Cin11} \textit{Z.~Cinkir}, Zhang's conjecture and the effective Bogomolov conjecture over function fields, Invent. Math. \textbf{183} (2011), no. 3, 517--562.
	\bibitem{Col85} \textit{R.~Coleman}, Torsion  points  on  curves  and p-adic  abelian  integrals, Ann.  of  Math. \textbf{121} (1985), 111--168.
	\bibitem{dJo18} \textit{R.~de Jong}, N\'eron--Tate heights of cycles on Jacobians, J. Algebraic Geom. \textbf{27} (2018), no. 2, 339--381.
	\bibitem{dJS18} \textit{R.~de Jong} and \textit{F.~Shokrieh}, Tropical moments of tropical Jacobians, Canad. J. Math. \textbf{75} (2023), no. 4, 1045--1075.
	\bibitem{Ful98} \textit{W.~Fulton}, Intersection theory, Second edition. Ergeb. Math. Grenzgeb. \textbf{2}, Springer-Verlag, Berlin, 1998.
	\bibitem{Har77} \textit{R.~Hartshorne}, Algebraic geometry, Graduate Texts in Mathematics, no. \textbf{52}, Springer-Verlag, New York-Heidelberg, 1977.
	\bibitem{Kuh21} \textit{L.~K\"uhne}, Equidistribution in families of abelian varieties and uniformity, preprint 2021, \url{http://arXiv.org/abs/2101.10272}.
	\bibitem{Mil86} \textit{J.~S.~Milne}, Jacobian varieties, in: Arithmetic geometry (Storrs, Conn., 1984), Springer, New York, 1986, 167--212.
	\bibitem{MB85} \textit{L.~Moret-Bailly}, M\'etriques permises, in: S\'eminaire sur les Pinceaux Arithm\'etiques: La Conjecture de Mordell, (Szpiro ed.), Ast\'erisque No. \textbf{127} (1985), 29--87.
	\bibitem{Mor96} \textit{A.~Moriwaki}, A sharp slope inequality for general stable fibrations of curves, J. Reine Angew. Math. \textbf{480} (1996), 177--195.
	\bibitem{Par68} \textit{A.~N.~Par{\v{s}}in}, Algebraic curves over function fields. I, Math. USSR-Izvestiya \textbf{2} (1968), no. 5, 1145--1170.
	\bibitem{PR04} \textit{R.~Pink} and \textit{D.~Roessler}, On $\psi$-invariant subvarieties of semiabelian varieties and the Manin--Mumford conjecture, J. Algebraic Geom. \textbf{13} (2004), 771--798.
	\bibitem{Ray83} \textit{M.~Raynaud}, Courbes sur une vari\'et\'e ab\'elienne et points de torsion, Invent. math. \textbf{71} (1983), 207--233.
	\bibitem{Sca01} \textit{T.~Scanlon}, Diophantine geometry from model theory, Bull. Symbolic Logic \textbf{7} (2001), 37--57.
	\bibitem{Szp81} \textit{L.~Szpiro}, Propri\'et\'es num\'eriques du faisceau dualisant relatif, in: S\'eminaire sur les pinceaux de courbes de genre au moint deaux, Ast\'erisque \textbf{86} (1981), Soci\'et\'e Math. de France, Paris, 44--78.
	\bibitem{Wil19} \textit{R.~Wilms}, On arithmetic intersection numbers on self-products of curves, J. Algebraic Geom. \textbf{31} (2022), no. 3, 397--424.
	\bibitem{Yam17} \textit{K.~Yamaki}, Survey on the geometric Bogomolov conjecture, in: Actes de la Conf\'erence ``Non-Archimedean Analytic Geometry: Theory and Practice'', Publ. Math. Besan\c con Alg\`ebre Th\'eorie Nr. \textbf{2017/1} (2017), 137--193.
	\bibitem{Yua21} \textit{X.~Yuan}, Arithmetic bigness and a uniform Bogomolov-type result, preprint 2021, \url{http://arXiv.org/abs/2108.05625}.
	\bibitem{Zha93} \textit{S.~Zhang}, Admissible pairing on a curve, Invent. Math. \textbf{112} (1993), no. 1, 171--193.
	\bibitem{Zha96} \textit{S.~Zhang}, Heights and reductions of semi-stable varieties, Compositio Math. \textbf{104} (1996), no. 1, 77--105.
	\bibitem{Zha95} \textit{S.~Zhang}, Small points and adelic metrics, J. Algebraic Geom. \textbf{4} (1995), no. 2, 281--300.
	\bibitem{Zha10} \textit{S.~Zhang}, Gross--Schoen cycles and dualising sheaves, Invent. Math. \textbf{179} (2010), no. 1, 1--73.
\end{thebibliography}
\end{document}